\newcommand{\numberseries}{\bfseries}   
\newlength{\thmtopspace}                
\newlength{\thmbotspace}                
\newlength{\thmheadspace}               
\newlength{\thmindent}                  
\newtheoremstyle{fixed bf head,slanted body}
                {\thmtopspace}{\thmbotspace}{\slshape}
                {\thmindent}{\bfseries}{.}{\thmheadspace}
                {{\numberseries \thmnumber{#2\;}}\thmname{#1}\thmnote{ (#3)}}
\newtheoremstyle{variable bf head,slanted body}
                {\thmtopspace}{\thmbotspace}{\slshape}
                {\thmindent}{\bfseries}{.}{\thmheadspace}
                {{\numberseries \thmnumber{#2\;}}\thmname{#1}\thmnote{ #3}}
\newtheoremstyle{fixed bf head,upright body}
                {\thmtopspace}{\thmbotspace}{\upshape}
                {\thmindent}{\bfseries}{.}{\thmheadspace}
                {{\numberseries \thmnumber{#2\;}}\thmname{#1}\thmnote{ (#3)}}
\newtheoremstyle{numbered paragraph}
                {\thmtopspace}{\thmbotspace}{\upshape}
                {\thmindent}{\upshape}{}{\thmheadspace}
                {{\numberseries \thmnumber{#2.}}}
\theoremstyle{fixed bf head,slanted body}
\newtheorem{res}{}[section]
          \newtheorem*{thm*}{Theorem}
\newtheorem{prp}[res]{Proposition}      \newtheorem*{prp*}{Proposition}
        \newtheorem*{cor*}{Corollary}
\newtheorem{lem}[res]{Lemma}            \newtheorem*{lem*}{Lemma}
\theoremstyle{variable bf head,slanted body}
     \newtheorem*{introthm*}{Theorem}
\theoremstyle{fixed bf head,upright body}
\newtheorem{stp}[res]{Setup}            \newtheorem*{stp*}{Setup}
\newtheorem{dfn}[res]{Definition}       \newtheorem*{dfn*}{Definition}
     \newtheorem*{con*}{Construction}
      \newtheorem*{obs*}{Observation}
\newtheorem{rmk}[res]{Remark}           \newtheorem*{rmk*}{Remark}
\newtheorem{exa}[res]{Example}          \newtheorem*{exa*}{Example}
         \newtheorem*{qst*}{Question}
\theoremstyle{numbered paragraph}
\newtheorem{ipg}[res]{}
\newlength{\thmlistleft}        
\newlength{\thmlistright}       
\newlength{\thmlistpartopsep}   
\newlength{\thmlisttopsep}      
\newlength{\thmlistparsep}      
\newlength{\thmlistitemsep}     
\newcounter{eqc} 
\newenvironment{eqc}{\begin{list}{\upshape (\textit{\roman{eqc}})}%
    {\usecounter{eqc}%
      \setlength{\leftmargin}{\thmlistleft}%
      \setlength{\labelwidth}{\thmlistleft}%
      \setlength{\rightmargin}{\thmlistright}%
      \setlength{\partopsep}{\thmlistpartopsep}%
      \setlength{\topsep}{\thmlisttopsep}%
      \setlength{\parsep}{\thmlistparsep}%
      \setlength{\itemsep}{\thmlistitemsep}}}%
  {\end{list}}%
\newcommand{\eqclbl}[1]{{\upshape(\textit{#1})}}
\newcounter{prt}
\newenvironment{prt}{\begin{list}{\upshape (\alph{prt})}%
    {\usecounter{prt}%
      \setlength{\leftmargin}{\thmlistleft}%
      \setlength{\labelwidth}{\thmlistleft}%
      \setlength{\rightmargin}{\thmlistright}%
      \setlength{\partopsep}{\thmlistpartopsep}%
      \setlength{\topsep}{\thmlisttopsep}%
      \setlength{\parsep}{\thmlistparsep}%
      \setlength{\itemsep}{\thmlistitemsep}}}%
  {\end{list}}%
\newcommand{\prtlbl}[1]{{\upshape(#1)}}
  \newcommand{\proofofimp}[3][:]{\mbox{\eqclbl{#2}$\!\implies\!$\eqclbl{#3}#1}}
\newcommand{\pgref}[1]{\ref{#1}}
\newcommand{\prpref}[2][Proposition~]{#1\pgref{prp:#2}}
\newcommand{\lemref}[2][Lemma~]{#1\pgref{lem:#2}}
\newcommand{\dfnref}[2][Definition~]{#1\pgref{dfn:#2}}
\newcommand{\exaref}[2][Example~]{#1\pgref{exa:#2}}
\newcommand{\secref}[2][Section~]{#1\ref{sec:#2}}
\renewcommand{\eqref}[1]{(\pgref{eq:#1})}
\def\@nobreak@#1{\mathchoice%
  {\nobreakdef@\displaystyle\f@size{#1}}%
  {\nobreakdef@\nobreakstyle\tf@size{\firstchoice@false #1}}%
  {\nobreakdef@\nobreakstyle\sf@size{\firstchoice@false #1}}%
  {\nobreakdef@\nobreakstyle\ssf@size{\firstchoice@false #1}}%
  \check@mathfonts}%
\def\nobreakdef@#1#2#3{\hbox{{%
                    \everymath{#1}%
                    \let\f@size#2\selectfont%
                    #3}}}%
\DeclareSymbolFont{usualmathcal}{OMS}{cmsy}{m}{n}
\DeclareSymbolFontAlphabet{\mathcal}{usualmathcal}
\DeclareSymbolFont{letters}{OML}{txmi}{m}{it}
\DeclareMathSymbol{\alpha}{\mathord}{letters}{"0B}
\DeclareMathSymbol{\beta}{\mathord}{letters}{"0C}
\DeclareMathSymbol{\gamma}{\mathord}{letters}{"0D}
\DeclareMathSymbol{\delta}{\mathord}{letters}{"0E}
\DeclareMathSymbol{\epsilon}{\mathord}{letters}{"0F}
\DeclareMathSymbol{\zeta}{\mathord}{letters}{"10}
\DeclareMathSymbol{\eta}{\mathord}{letters}{"11}
\DeclareMathSymbol{\theta}{\mathord}{letters}{"12}
\DeclareMathSymbol{\iota}{\mathord}{letters}{"13}
\DeclareMathSymbol{\kappa}{\mathord}{letters}{"14}
\DeclareMathSymbol{\lambda}{\mathord}{letters}{"15}
\DeclareMathSymbol{\mu}{\mathord}{letters}{"16}
\DeclareMathSymbol{\nu}{\mathord}{letters}{"17}
\DeclareMathSymbol{\xi}{\mathord}{letters}{"18}
\DeclareMathSymbol{\pi}{\mathord}{letters}{"19}
\DeclareMathSymbol{\rho}{\mathord}{letters}{"1A}
\DeclareMathSymbol{\sigma}{\mathord}{letters}{"1B}
\DeclareMathSymbol{\tau}{\mathord}{letters}{"1C}
\DeclareMathSymbol{\upsilon}{\mathord}{letters}{"1D}
\DeclareMathSymbol{\phi}{\mathord}{letters}{"1E}
\DeclareMathSymbol{\chi}{\mathord}{letters}{"1F}
\DeclareMathSymbol{\psi}{\mathord}{letters}{"20}
\DeclareMathSymbol{\omega}{\mathord}{letters}{"21}
\DeclareMathSymbol{\varepsilon}{\mathord}{letters}{"22}
\DeclareMathSymbol{\vartheta}{\mathord}{letters}{"23}
\DeclareMathSymbol{\varpi}{\mathord}{letters}{"24}
\DeclareMathSymbol{\varrho}{\mathord}{letters}{"25}
\DeclareMathSymbol{\varsigma}{\mathord}{letters}{"26}
\DeclareMathSymbol{\varphi}{\mathord}{letters}{"27}
\DeclareMathSymbol{\Gamma}{\mathord}{letters}{"00}
\DeclareMathSymbol{\Delta}{\mathord}{letters}{"01}
\DeclareMathSymbol{\Theta}{\mathord}{letters}{"02}
\DeclareMathSymbol{\Lambda}{\mathord}{letters}{"03}
\DeclareMathSymbol{\Xi}{\mathord}{letters}{"04}
\DeclareMathSymbol{\Pi}{\mathord}{letters}{"05}
\DeclareMathSymbol{\Sigma}{\mathord}{letters}{"06}
\DeclareMathSymbol{\Upsilon}{\mathord}{letters}{"07}
\DeclareMathSymbol{\Phi}{\mathord}{letters}{"08}
\DeclareMathSymbol{\Psi}{\mathord}{letters}{"09}
\DeclareMathSymbol{\Omega}{\mathord}{letters}{"0A}
\DeclareMathSymbol{\upGamma}{\mathalpha}{operators}{"00}
\DeclareMathSymbol{\upDelta}{\mathalpha}{operators}{"01}
\DeclareMathSymbol{\upTheta}{\mathalpha}{operators}{"02}
\DeclareMathSymbol{\upLambda}{\mathalpha}{operators}{"03}
\DeclareMathSymbol{\upXi}{\mathalpha}{operators}{"04}
\DeclareMathSymbol{\upPi}{\mathalpha}{operators}{"05}
\DeclareMathSymbol{\upSigma}{\mathalpha}{operators}{"06}
\DeclareMathSymbol{\upUpsilon}{\mathalpha}{operators}{"07}
\DeclareMathSymbol{\upPhi}{\mathalpha}{operators}{"08}
\DeclareMathSymbol{\upPsi}{\mathalpha}{operators}{"09}
\DeclareMathSymbol{\upOmega}{\mathalpha}{operators}{"0A}
\newcommand{\MCM}{\mathsf{MCM}}
\renewcommand{\mod}{\mathsf{mod}}
\newcommand{\da}[1]{#1^\dagger}
\newcommand{\dda}[1]{#1^{\dagger\dagger}}
\newcommand{\ddda}[1]{#1^{\dagger\dagger\dagger}}
\newcommand{\Coker}[1]{\operatorname{Coker}#1}
\newcommand{\Ker}[1]{\operatorname{Ker}#1}
\renewcommand{\Im}[1]{\operatorname{Im}#1}
\newcommand{\Hom}[3][R]{\operatorname{Hom}_{#1}(#2,#3)}
\newcommand{\Ext}[4][R]{\operatorname{Ext}_{#1}^{#2}(#3,#4)}
\newcommand{\depth}[2][R]{\operatorname{depth}_{#1}#2}
\newcommand{\dual}{\upOmega}
\newcommand{\m}{\mathfrak{m}}
\newcommand{\n}{\mathfrak{n}}
\renewcommand{\a}{\mathfrak{a}}
\newcommand{\Syz}[3][\mathcal{A}]{\operatorname{Syz}^{#1}_{#2}(#3)}
\newcommand{\mSyz}[3][\mathcal{A}]{\operatorname{syz}^{#1}_{#2}(#3)}
\newcommand{\Cosyz}[3][\mathcal{A}]{\operatorname{Cosyz}_{#1}^{#2}(#3)}
\newcommand{\mCosyz}[3][\mathcal{A}]{\operatorname{cosyz}_{#1}^{#2}(#3)}
\begin{document}

\title{Approximations by maximal Cohen--Macaulay modules}

\author{Henrik Holm}

\address{University of Copenhagen, 2100 Copenhagen {\O}, Denmark}
 
\email{holm@math.ku.dk}

\urladdr{http://www.math.ku.dk/\~{}holm/}


\keywords{Cosyzygy; envelope; maximal Cohen--Macaulay module; special preenvelope; uni\-que lifting property.}

\subjclass[2010]{13C14; 13D05}


\begin{abstract}
  Auslander and Buchweitz have proved that every finitely generated module over a Cohen--Macaulay (CM) ring with a dualizing module admits a so-called maximal CM approximation. In terms of relative homological algebra, this means that every finitely generated module has a special maximal CM precover. In this paper, we prove the existence of special maximal CM preenvelopes and, in the case where the ground ring is henselian, of maximal CM envelopes. We also characterize the rings over which every finitely generated module has a maximal CM envelope with the unique lifting property. Finally, we show that cosyzygies with respect to the class of maximal CM modules must eventually be maximal CM, and we compute some examples.  
\end{abstract}

\maketitle

\section{Introduction}

Let $R$ be a commutative noetherian local Cohen--Macaulay (CM) ring with a dualizing module $\dual$ and denote by $\MCM$ the class of maximal CM $R$-modules. Auslander and Buchweitz construct in \cite[Thm.~A]{MAsROB89} a \emph{maximal CM approximation} for every finitely generated $R$-module $M$, that is, a short exact sequence,
\begin{displaymath}
    0 \longrightarrow I \longrightarrow X \stackrel{\pi}{\longrightarrow} M \longrightarrow 0
\end{displaymath}
where $X$ belongs to $\MCM$ and $I$ has finite injective dimension. By a result of Ischebeck~\cite{FIs69} one has $\Ext{1}{Y}{I}=0$ for all $Y$ in $\MCM$, so in terms of relative homological algebra, this means that the homomorphism $\pi \colon X \twoheadrightarrow M$ is a \emph{special $\MCM$-precover} of $M$. A result of Takahashi \cite[Cor.~2.5]{RTk05a} shows that if $R$ is henselian (for example, if $R$ is complete), then every $\MCM$-precover can be ``refined'' to an \emph{$\MCM$-cover}. This result of Takahashi follows from Prop.~2.4 in \emph{loc.~cit.}, which the author contributes to Yoshino \cite[Lem.~2.2]{YY93} (written in Japanese). We summarize these results in the following theorem.

\begin{introthm*}[(Auslander and Buchweitz \cite{MAsROB89}, Takahashi \cite{RTk05a}, and Yoshino \cite{YY93})]
  \quad
  \begin{prt}
  \item Every finitely generated $R$-module has a special $\MCM$-precover (also called a 
special right $\MCM$-approximation). 
  \item If $R$ is henselian, then every finitely generated $R$-module has an $\MCM$-cover (also called a minimal right $\MCM$-approximation).
  \end{prt}
\end{introthm*}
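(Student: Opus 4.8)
The plan is to assemble the three cited results; there is little to do beyond fitting the pieces together.

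For part~(a) I would start from the \emph{maximal CM approximation} of Auslander and Buchweitz \cite[Thm.~A]{MAsROB89}: every finitely generated $R$-module $M$ sits in a short exact sequence
\begin{displaymath}
  0 \longrightarrow I \longrightarrow X \stackrel{\pi}{\longrightarrow} M \longrightarrow 0
\end{displaymath}
with $X$ in $\MCM$ and $I$ of finite injective dimension. To see that $\pi$ is a \emph{special} $\MCM$-precover, apply $\Hom{Y}{-}$ to this sequence for an arbitrary $Y$ in $\MCM$. Ischebeck's theorem \cite{FIs69} yields $\Ext{1}{Y}{I}=0$ — a module of finite injective dimension admits no higher Ext from a maximal CM module — so the long exact sequence shows that $\Hom{Y}{\pi}$ is surjective; hence $\pi$ is an $\MCM$-precover. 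The same vanishing says that $\Ker\pi = I$ satisfies $\Ext{1}{Y}{I}=0$ for every $Y$ in $\MCM$, which is precisely the extra condition in the definition of a special precover. This settles~(a).

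For part~(b) I would invoke Takahashi's refinement \cite[Cor.~2.5]{RTk05a}, itself deduced from \cite[Prop.~2.4]{RTk05a} (attributed there to Yoshino \cite[Lem.~2.2]{YY93}): over a henselian local ring, any finitely generated module that admits an $\MCM$-precover admits an $\MCM$-cover. Since part~(a) provides such a precover for every finitely generated $R$-module, the claim follows.

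The only step with genuine content beyond citation is the henselian refinement used in~(b), so that is where any real obstacle lies. Its mechanism is that over a henselian local ring the endomorphism ring of a finitely generated module is semiperfect, so idempotents lift; one may then split off from a given precover the largest summand on which $\pi$ is ``redundant'', and what remains is a cover. The synthesis carried out above is otherwise routine.
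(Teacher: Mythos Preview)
Your proposal is correct and matches the paper's treatment exactly: the paper does not give an independent proof of this theorem but simply assembles the cited results in the paragraph preceding the statement, invoking Auslander--Buchweitz for the approximation sequence, Ischebeck for the Ext-vanishing that makes $\pi$ special, and Takahashi/Yoshino for the henselian refinement. Your added remark on the semiperfect endomorphism ring mechanism behind part~(b) is accurate and goes slightly beyond what the paper spells out.
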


This paper is concerned with the existence and the construction of special $\MCM$-preen\-ve\-lopes and $\MCM$-envelopes of finitely generated modules. Our first main result, which is proved in \secref{MCM-envelopes}, is the following ``dual'' of the theorem above.

\begin{introthm*}[A]
  The following assertions hold.
  \begin{prt}
  \item
   Every finitely generated $R$-module $M$ has a special $\MCM$-preenvelope (also called a special left $\MCM$-approximation).

\item If $R$ is henselian, then every finitely generated $R$-module has an $\MCM$-envelope (also called a minimal left $\MCM$-approximation).
  \end{prt}
  Moreover, every special $\MCM$-preenvelope, in particular, every $\MCM$-envelope \mbox{$\mu \colon M \to X$} of a finitely generated $R$-module $M$ has the property that $\Hom{\Coker{\mu}}{\dual}$ has finite injective dimension. 
\end{introthm*}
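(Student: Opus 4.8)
The plan is to prove (a) by dualizing the special $\MCM$-precovers of the Introduction Theorem against the dualizing module, using the functor $\da{(-)} = \Hom{-}{\dual}$. The relevant standard properties are: $\da{(-)}$ restricts to a duality on $\MCM$, so $\da X \in \MCM$, $\dda X \cong X$ and $\Ext{\ge 1}{X}{\dual} = 0$ for $X \in \MCM$; for any finitely generated $M$ the module $\da M$ is reflexive with respect to $\dual$, hence so is $\dda M$, and $\ddda M \cong \da M$; the biduality map $\eta_{M} \colon M \to \dda M$ is an isomorphism in codimension $\le 1$, so $\Ker{\eta_{M}}$ has no associated prime of maximal dimension and $\dim \Coker{\eta_{M}} \le \depthR - 2$; and, by Ischebeck, $\Ext{\ge 1}{X}{I} = 0$ whenever $X \in \MCM$ and $\operatorname{injdim}_{R} I < \infty$. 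Given a finitely generated $M$, apply the Introduction Theorem~(a) to $\da M$ to obtain a short exact sequence $0 \to I \to X \stackrel{\pi}{\longrightarrow} \da M \to 0$ with $X \in \MCM$ and $\operatorname{injdim}_{R} I < \infty$. Dualizing and using $\Ext{1}{X}{\dual} = 0$ gives a short exact sequence $0 \to \dda M \stackrel{\da\pi}{\longrightarrow} \da X \to C \to 0$ with $C = \Coker{\da\pi}$, and $\da\pi$ is a monomorphism. Put $\mu = \da\pi \circ \eta_{M} \colon M \to \da X$; since $\da X \in \MCM$ this is the candidate preenvelope, and there is a short exact sequence $0 \to \Coker{\eta_{M}} \to \Coker{\mu} \to C \to 0$.

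I would first check that $\mu$ is a preenvelope. For $X' \in \MCM$, write $X' = \da W$ with $W = \da{X'} \in \MCM$. By naturality of biduality and $\eta_{X'} = \operatorname{id}$, the assignment $f \mapsto \dda f$ is a right inverse of the restriction map $\Hom{\dda M}{X'} \to \Hom{M}{X'}$, so the latter is surjective; and under the duality on $\MCM$ the map $\Hom{\da X}{X'} \to \Hom{\dda M}{X'}$ induced by $\da\pi$ corresponds to $\Hom{W}{\pi} \colon \Hom{W}{X} \to \Hom{W}{\da M}$, which is surjective because $\pi$ is an $\MCM$-precover and $W \in \MCM$. Composing, $\Hom{\da X}{X'} \to \Hom{M}{X'}$ is surjective. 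Next I would dualize $0 \to \dda M \to \da X \to C \to 0$ once more: using $\Ext{1}{\da X}{\dual} = 0$, the identifications $\Hom{\da X}{\dual} \cong X$ and $\Hom{\dda M}{\dual} \cong \da M$, and the fact that by naturality of $\eta$ the induced map $\Hom{\da X}{\dual} \to \Hom{\dda M}{\dual}$ corresponds to $\pi$, one obtains an exact sequence $0 \to \Hom{C}{\dual} \to X \stackrel{\pi}{\longrightarrow} \da M \to \Ext{1}{C}{\dual} \to 0$. As $\pi$ is surjective, this gives $\Ext{1}{C}{\dual} = 0$ and $\Hom{C}{\dual} \cong \Ker{\pi} = I$. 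Since $\dim \Coker{\eta_{M}} \le \depthR - 2$ forces $\Hom{\Coker{\eta_{M}}}{\dual} = 0$, it follows that $\Hom{\Coker{\mu}}{\dual} \cong \Hom{C}{\dual} \cong I$ has finite injective dimension, which is the ``moreover'' statement.

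To see that $\mu$ is special I would argue that $\Coker{\mu} \in {}^{\perp}\MCM$. For $X' \in \MCM$, Hom--tensor adjunction and $X' \simeq \operatorname{RHom}_{R}(\da{X'}, \dual)$ give an isomorphism $\operatorname{RHom}_{R}(C, X') \simeq \operatorname{RHom}_{R}(\da{X'}, \operatorname{RHom}_{R}(C, \dual))$ in the derived category. In the hyper-Ext spectral sequence $\Ext{p}{\da{X'}}{H^{q}(\operatorname{RHom}_{R}(C, \dual))} \Rightarrow \Ext{p+q}{C}{X'}$, the line $p + q = 1$ consists of $\Ext{1}{\da{X'}}{\Hom{C}{\dual}}$ — which vanishes by Ischebeck, as $\Hom{C}{\dual} \cong I$ has finite injective dimension and $\da{X'} \in \MCM$ — and $\Hom{\da{X'}}{\Ext{1}{C}{\dual}}$, which is $0$ because $\Ext{1}{C}{\dual} = 0$; hence $\Ext{1}{C}{X'} = 0$. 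Together with $\Ext{1}{\Coker{\eta_{M}}}{X'} = 0$ (again from $\dim \Coker{\eta_{M}} \le \depthR - 2$), the short exact sequence $0 \to \Coker{\eta_{M}} \to \Coker{\mu} \to C \to 0$ yields $\Ext{1}{\Coker{\mu}}{X'} = 0$; and $\Hom{\Ker{\mu}}{X'} = 0$ since $\Ker{\mu} = \Ker{\eta_{M}}$ has no associated prime of maximal dimension. Thus $\mu$ is a special $\MCM$-preenvelope. Finally, (b) follows from (a) by the argument of Yoshino and Takahashi recalled in the Introduction, applied on the left: over a henselian local ring an $\MCM$-preenvelope can be refined to an $\MCM$-envelope by the usual Fitting-lemma argument with lifting of idempotents, dual to Takahashi's passage from precovers to covers.

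The step I expect to require the most care is showing that $\mu$ is special, which is where the duality is used essentially — through the adjunction identity above combined with the computation $\Hom{C}{\dual} \cong I$ and $\Ext{1}{C}{\dual} = 0$ — together with the bookkeeping forced by the non-reflexivity of $M$, namely verifying that $\Ker{\eta_{M}}$ and $\Coker{\eta_{M}}$ are invisible to $\MCM$.
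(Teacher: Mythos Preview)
Your construction $\mu = \da{\pi}\circ\eta_M$ is exactly the paper's, and your verification that $\mu$ is a preenvelope is essentially the same swap argument. There are, however, two points worth flagging.

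First, a genuine gap: the ``moreover'' clause asserts that \emph{every} special $\MCM$-preenvelope $\mu'$ satisfies $\operatorname{injdim}_R \da{(\Coker{\mu'})}<\infty$, but you only establish this for the particular $\mu$ you build (where indeed $\da{(\Coker{\mu})}\cong I$). A Schanuel-type comparison gives $\Coker{\mu}\oplus A' \cong \Coker{\mu'}\oplus A$ with $A,A'\in\MCM$, but after applying $\da{(-)}$ the summands $\da{A},\da{A'}$ are maximal CM, not of finite injective dimension, so finiteness of $\operatorname{injdim}_R\da{(\Coker{\mu'})}$ does not follow. The paper handles this separately: from $\Ext{1}{\Coker{\mu'}}{Y}=0$ for all $Y\in\MCM$ it deduces, via an elementary splitting lemma (the paper's \lemref{Ext}), that $\Ext{1}{Z}{\da{(\Coker{\mu'})}}=0$ for all $Z\in\MCM$; then a hull of finite injective dimension $0\to\da{(\Coker{\mu'})}\to I'\to Z\to 0$ must split, forcing $\da{(\Coker{\mu'})}$ to be a summand of $I'$.

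Second, a minor imprecision: $\eta_M$ is \emph{not} an isomorphism in codimension $\le 1$ (take $M=k$ over a one-dimensional ring), only surjective there; over a one-dimensional CM local ring every finitely generated module is an extension of a torsion-free (hence maximal CM) module by a finite-length one, and $\eta$ is the projection onto the torsion-free quotient. Your conclusions $\dim\Ker{\eta_M}\le d-1$ and $\dim\Coker{\eta_M}\le d-2$ are nevertheless correct, and the rest of your argument goes through. Your route to specialness---the hyper-Ext spectral sequence for $C$, combined with Ischebeck applied to $\Coker{\eta_M}$---is valid and amounts to a derived-category proof of one direction of the paper's \lemref{Ext}; the paper instead works directly with $\Coker{\mu}$, identifying $\da{\mu}$ with $\pi$ and proving $\Ext{1}{\Coker{\mu}}{\dual}=0$ via the factorization of $\mu$ through its image, then invoking \lemref{Ext}. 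For part~(b) the paper shows explicitly that if $\pi$ is a cover then $\mu$ is an envelope; your appeal to a dual Yoshino--Takahashi refinement is legitimate but less self-contained.
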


We mention that \cite[Thm.~C]{Holm14} shows the existence of (non-special!) $\MCM$-preenve\-lopes, but its proof is not constructive: It it a consequence of an abstract result by Crawley-Boevey \cite[Thm.~(4.2)]{WCB94} combined with the fact---also proved in \cite{Holm14}---that the direct limit closure of $\MCM$ is closed under products. Theorem~A above is not only stronger than \cite[Thm.~C]{Holm14}; our proof---which is modelled on that of \cite[Thm.~1.6]{HJ11}---also shows how (special) $\MCM$-(pre)envelopes can be constructed from (special) $\MCM$-(pre)covers.

In \secref{Examples} we compute the $\MCM$-envelope of some specific modules. In \secref{ulp} we turn our attention to $\MCM$-envelopes with the \emph{unique lifting property}, and we characterize the rings over which every finitely generated module admits such an envelope:

\begin{introthm*}[B]
  The following conditions are equivalent.
  \begin{eqc}
  \item For every finitely generated $R$-module $M$, the module $\Hom{M}{\dual}$ is maximal CM.
  
  \item The Krull dimension of $R$ is $\,\leqslant 2$.  

  \item The inclusion functor $\MCM \hookrightarrow \mod$ has a left adjoint.
      
  \item Every finitely generated $R$-module has an $\MCM$-envelope with the unique lifting property.
  \end{eqc}
\end{introthm*}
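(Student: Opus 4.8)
The plan is to prove \eqclbl{i}$\Leftrightarrow$\eqclbl{ii}, then \eqclbl{i}$\Leftrightarrow$\eqclbl{iii}, and finally the (essentially formal) equivalence \eqclbl{iii}$\Leftrightarrow$\eqclbl{iv}. Write $\da{M}=\Hom{M}{\dual}$ and let $\mu_{M}\colon M\to\dda{M}$ be the biduality homomorphism. I will use two standard facts. First, $(-)^{\dagger}$ restricts to a duality on $\MCM$, so every $X\in\MCM$ satisfies $\Ext{i}{X}{\dual}=0$ for $i>0$ and $\mu_{X}$ is an isomorphism. Second, $\da{M}$ satisfies Serre's condition $(S_{2})$ for every finitely generated $M$; this follows by taking a presentation $R^{a}\to R^{b}\to M\to0$, dualizing into $\dual$ to get an exact sequence $0\to\da{M}\to\dual^{b}\to\dual^{a}$, breaking it into two short exact sequences, applying the depth lemma, and localizing. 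In particular $\da{M}$ is $\dual$-reflexive, so $\da{(\mu_{M})}\colon\ddda{M}\to\da{M}$ is an isomorphism. \proofofimp{ii}{i} If $\dim R\leqslant2$, then $(S_{2})$ at $\m$ gives $\depth{\da{M}}\geqslant\min\{2,\dim R\}=\dim R$, and since $\depth{\da{M}}\leqslant\dim R$ when $\da{M}\neq0$, the module $\da{M}$ is maximal Cohen--Macaulay (with the convention $0\in\MCM$). \proofofimp{i}{ii} I argue contrapositively. If $d=\dim R\geqslant3$, put $N=\Syz[2]{R}{k}$ with $k=R/\m$; breaking a minimal free resolution of $k$ into short exact sequences and applying the depth lemma gives $\depth{N}=2$, while $N$ is free on the punctured spectrum (since $k$ vanishes there), so $\depth[R_{\p}]{N_{\p}}=\depthR[R_{\p}]=\dim R_{\p}$ for $\p\neq\m$. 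Thus $N$ is $(S_{2})$, hence $\dual$-reflexive, so $N\cong\dda{N}$; but $\depth{N}=2<d$ shows $N\notin\MCM$. Therefore $M:=\da{N}$ is finitely generated and $\da{M}=\dda{N}\cong N\notin\MCM$, contradicting \eqclbl{i}.

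For \eqclbl{i}$\Leftrightarrow$\eqclbl{iii} the crux is: for every finitely generated $M$ and every $X\in\MCM$, the map $\mu_{M}^{\ast}\colon\Hom{\dda{M}}{X}\to\Hom{M}{X}$ is bijective, naturally in $M$ and $X$. Indeed, from $X\cong\dda{X}$ and two applications of tensor--hom adjunction one obtains a natural isomorphism $\Hom{A}{X}\cong\Hom{\da{X}}{\da{A}}$; under it, $\mu_{M}^{\ast}$ becomes the map $\Hom{\da{X}}{\ddda{M}}\to\Hom{\da{X}}{\da{M}}$ induced by the isomorphism $\da{(\mu_{M})}$, hence is bijective. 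Now if \eqclbl{i} holds, then $\dda{M}\in\MCM$ for every $M$ (being the $\dual$-dual of $\da{M}$), so $(-)^{\dagger\dagger}\colon\mod\to\MCM$ is a left adjoint of the inclusion with unit $\mu$; this is \eqclbl{iii}. Conversely, let $L$ be a left adjoint of the inclusion with unit $\eta$. Then $\Hom{L(M)}{-}$ and $\Hom{\dda{M}}{-}$ are both naturally isomorphic to $\Hom{M}{-}$ on $\MCM$; evaluating at $\dual$ gives $\da{L(M)}\cong\ddda{M}\cong\da{M}$, and applying $(-)^{\dagger}$ together with the $\dual$-reflexivity of $L(M)$ gives $L(M)\cong\dda{M}$. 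Hence $\dda{M}\in\MCM$ for all $M$; applying this with $\da{M}$ in place of $M$ gives $\ddda{M}=\dda{(\da{M})}\in\MCM$, and since $\ddda{M}\cong\da{M}$ this is \eqclbl{i}.

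The equivalence \eqclbl{iii}$\Leftrightarrow$\eqclbl{iv} is formal: if $L$ is a left adjoint of $\iota\colon\MCM\hookrightarrow\mod$, then each unit $\eta_{M}\colon M\to L(M)$ is an $\MCM$-preenvelope through which every homomorphism from $M$ into $\MCM$ factors uniquely, and, taking the homomorphism to be $\eta_{M}$, the only endomorphism $e$ of $L(M)$ with $e\eta_{M}=\eta_{M}$ is the identity, so $\eta_{M}$ is left minimal, hence an $\MCM$-envelope, and it has the unique lifting property; conversely, an $\MCM$-envelope with the unique lifting property is exactly a reflection of $M$ along $\iota$, and if every finitely generated module has one, these reflections assemble, by their universal property, into a left adjoint of $\iota$. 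The step I expect to demand the most care is \eqclbl{i}$\Leftrightarrow$\eqclbl{iii}: one must verify that the bijection $\Hom{\dda{M}}{X}\cong\Hom{M}{X}$ is natural in $X$ and induced by $\mu_{M}^{\ast}$, and, in passing from an abstract left adjoint back to $(-)^{\dagger\dagger}$, one cannot invoke the Yoneda lemma inside $\MCM$ until the isomorphism $L(M)\cong\dda{M}$ has been produced by the $(-)^{\dagger}$-argument, since a priori $\dda{M}$ need not belong to $\MCM$. A minor but real point in \proofofimp{i}{ii} is verifying that $\Syz[2]{R}{k}$ satisfies $(S_{2})$ for \emph{every} $R$ of dimension at least $3$; that it is free on the punctured spectrum settles this uniformly.
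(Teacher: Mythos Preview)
Your overall strategy is sound, but there is one recurring gap: you assert that an $(S_2)$ module is automatically $\dual$-reflexive. This is false in general. For instance, over $R=k[[x,y]]$ with $\dual=R$, the module $R/(x)$ is $(S_2)$ as an $R$-module (it is CM of dimension~$1$), yet $\da{(R/(x))}=\Hom{R/(x)}{R}=0$, so it is not reflexive. What is true is that $\dual$-reflexivity holds for $(S_2)$ modules satisfying an additional condition at primes of small height (equivalently, for modules of the form $\da{N}$). Both of your uses can be repaired. For $\da{M}$: by \lemref{split} the map $\da{\mu_M}$ is split surjective, and it is also injective because $\Coker\mu_M$ vanishes at every minimal prime (Matlis duality over the artinian localization), whence $\da{(\Coker\mu_M)}=0$; thus $\da{\mu_M}$ is an isomorphism. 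For $N=\Syz[R]{2}{k}$ with $d\geqslant 3$: since $N$ is free on the punctured spectrum (as you note), both $\Ker\mu_N$ and $\Coker\mu_N$ have finite length, and then the depth bounds $\depth N=2$ and $\depth{\dda{N}}\geqslant 2$ force both to vanish. Alternatively---and this is how the paper handles \eqclbl{i}$\Rightarrow$\eqclbl{ii}---one can bypass reflexivity entirely: writing $0\to K\to L_1\to L_0\to N\to 0$ with $L_i$ free and setting $C=\Coker(\da{L_0}\to\da{L_1})$, left exactness of $\da{(-)}$ gives $K\cong\da{C}$ directly, so under \eqclbl{i} every second syzygy is maximal CM and $\depth N\geqslant d-2$ for every $N$, in particular for $N=k$.

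Apart from this, your route differs from the paper's cycle \eqclbl{i}$\Rightarrow$\eqclbl{ii}$\Rightarrow$\eqclbl{iii}$\Rightarrow$\eqclbl{iv}$\Rightarrow$\eqclbl{i} chiefly in that you establish the bijection $\Hom{\dda{M}}{X}\cong\Hom{M}{X}$ for $X\in\MCM$ \emph{unconditionally}, via reflexivity of $\da{M}$, whereas the paper only needs and proves it under hypothesis \eqclbl{ii}; this is a pleasant strengthening. Your \eqclbl{iii}$\Rightarrow$\eqclbl{i}, however, takes an unnecessary detour through $\dda{M}$: evaluating the adjunction at $X=\dual$ already gives an isomorphism $\da{L(M)}\cong\da{M}$, and since $L(M)\in\MCM$ implies $\da{L(M)}\in\MCM$, you get $\da{M}\in\MCM$ in one line---this is essentially the paper's \eqclbl{iv}$\Rightarrow$\eqclbl{i}.
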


From a homological point of view, maximal CM modules are interesting because every module can be finitely resolved by such modules. More precisely, if $d$ denotes the Krull dimension of the CM ring $R$, and if $M$ is any finitely generated $R$-module with a resolution
\begin{displaymath}
\cdots \longrightarrow X_d \longrightarrow X_{d-1} \longrightarrow X_{d-2} \longrightarrow \cdots \longrightarrow X_1 \longrightarrow X_0 \longrightarrow M \longrightarrow 0
\end{displaymath}
by finitely generated free $R$-modules $X_0, X_1,\ldots$, then the $n^\mathrm{th}$ syzygy of $M$, i.e.~the module $\Syz[]{n}{M}=\Ker(X_{n-1} \to X_{n-2})$, is maximal CM for every $n \geqslant d$. Actually, the same conclusion holds if $X_0, X_1,\ldots$ are just assumed to be maximal CM (but not necessarily free); this well-known fact follows from the behaviour of depth in short exact sequences; see Bruns and Herzog \cite[Prop.~1.2.9]{BruHer} or \lemref{depth}. Given a finitely generated $R$-module $M$, one can \textsl{not} always construct an \textsl{exact} sequence
\begin{displaymath}
\tag{\text{$*$}}
0 \longrightarrow M \longrightarrow X^0 \longrightarrow X^1 \longrightarrow \cdots
\end{displaymath}
where $X^0,X^1,\ldots$ are maximal CM; however, there is a canonical way to construct a \mbox{\textsl{complex}} of the form ($*$). Theorem~A guarantees the existence of $\MCM$-preenvelopes, which makes the following construction possible: Take an $\MCM$-preenvelope $\mu^0 \colon M \to X^0$ of $M$ and set $C^1 = \Coker{\mu^0}$; take an $\MCM$-preenvelope $\mu^1 \colon C^1 \to X^1$ of $C^1$ and set $C^2 = \Coker{\mu^1}$;~etc. The hereby constructed complex ($*$)\,---\,which is called a \emph{proper $\MCM$-coresolution} or an \emph{$\MCM$-resolvent} of $M$\,---\,is not necessarily exact, but it becomes exact if one applies the functor $\Hom{-}{Y}$ to it for any $Y$ in $\MCM$. The module $C^n = \Coker{(X^{n-2} \to X^{n-1})}$ is called the \emph{$n^\mathrm{th}$ cosyzygy of $M$ with respect to $\MCM$}, and it is denoted by $\Cosyz[\MCM]{n}{M}$. In \secref{cosyz} we prove that such cosyzygies must eventually be maximal CM:

\enlargethispage{4.5ex}

\begin{introthm*}[C]
Let $M$ be a finitely generated $R$-module. For every $n \geqslant d$ the $n^\mathrm{th}$ co\-sy\-zy\-gy, $\Cosyz[\MCM]{n}{M}$, of $M$ with respect to $\MCM$ is maximal CM.
\end{introthm*}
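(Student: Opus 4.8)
The plan is to pass to the $\dual$-dual $(-)^\dagger=\Hom{-}{\dual}$ and reduce the statement to its companion recalled in the introduction, namely that the $n$th syzygy (now with respect to $\MCM$) of a finitely generated module is maximal CM for $n\ge d$. I will use freely that $\dual$ is maximal CM, that $\Ext{i}{Y}{\dual}=0$ for $i\ge1$ whenever $Y\in\MCM$, and that $(-)^\dagger$ restricts to an exact duality on $\MCM$ for which the canonical map $Y\to Y^{\dagger\dagger}$ is an isomorphism. Fix a proper $\MCM$-coresolution $0\to M\to X^0\to X^1\to\cdots$ as in the construction preceding the theorem (its existence rests on the $\MCM$-preenvelopes of Theorem~A); let $\mu^n\colon C^n\to X^n$ be the preenvelope used in step $n$ and $\delta^n\colon X^n\to X^{n+1}$ the resulting differential of ($*$), so that $C^0=M$, $C^{n+1}=\Coker{\mu^n}$, and $\delta^n$ is $\mu^{n+1}$ precomposed with the canonical surjection $X^n\twoheadrightarrow C^{n+1}$; put $\bar C^n=\Im{\mu^n}\subseteq X^n$ and $K^n=\Ker{\mu^n}$. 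Since $\Cosyz[\MCM]{n}{M}$ is determined by $M$ up to a direct summand in $\MCM$, and $\MCM$ is closed under direct summands, it is enough to prove $C^n\in\MCM$ for $n\ge d$ for this one coresolution. The cases $d\le1$ are immediate: if $d=0$ then $\MCM=\mod$; if $d=1$ then the preenvelope property of $\mu^{n-1}$ applied with $Y=\dual$ makes $(X^{n-1})^\dagger\to(\bar C^{n-1})^\dagger$ surjective, so the long exact sequence of $0\to\bar C^{n-1}\to X^{n-1}\to C^n\to0$ (together with $\Ext{1}{X^{n-1}}{\dual}=0$) yields $\Ext{1}{C^n}{\dual}=0$, whence $\operatorname{depth}C^n\ge1=d$ by Ischebeck's theorem. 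So assume $d\ge2$.

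The first step is to dualize ($*$). For each $n$ the preenvelope property of $\mu^n$ with $Y=\dual$ makes $(X^n)^\dagger\to(C^n)^\dagger$ surjective; factoring $\mu^n$ as $C^n\twoheadrightarrow\bar C^n\hookrightarrow X^n$ shows in addition that $(X^n)^\dagger\to(\bar C^n)^\dagger$ is surjective and that $(\bar C^n)^\dagger\cong(C^n)^\dagger$. Applying $(-)^\dagger$ to $0\to\bar C^n\to X^n\to C^{n+1}\to0$ and using this surjectivity and $\Ext{1}{X^n}{\dual}=0$ then gives $\Ext{1}{C^{n+1}}{\dual}=0$ together with a short exact sequence $0\to(C^{n+1})^\dagger\to(X^n)^\dagger\to(C^n)^\dagger\to0$. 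Splicing these, with $(C^0)^\dagger=N:=\Hom{M}{\dual}$, produces an exact sequence $\cdots\to(X^1)^\dagger\to(X^0)^\dagger\to N\to0$: a resolution of $N$ by the maximal CM modules $(X^i)^\dagger$ whose $n$th syzygy is $(C^n)^\dagger$. Moreover $\operatorname{depth}N\ge2$: a free presentation $R^b\to R^a\to M\to0$ gives an exact sequence $0\to N\to\dual^a\to\dual^b$, so letting $L$ be the image of the right-hand map (a submodule of $\dual^b$, hence of depth $\ge1$), the depth lemma \lemref{depth} applied to $0\to N\to\dual^a\to L\to0$ yields $\operatorname{depth}N\ge\min\{d,2\}=2$. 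Feeding this into the standard syzygy estimate (the depth lemma again) gives $\operatorname{depth}(C^n)^\dagger\ge\min\{d,n+2\}$, so $(C^n)^\dagger\in\MCM$ whenever $n\ge d-2$.

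It remains to transfer maximal Cohen--Macaulayness back across $(-)^\dagger$: I claim that for $n\ge d$ the canonical map $\eta_n\colon C^n\to(C^n)^{\dagger\dagger}$ is an isomorphism, which suffices since $(C^n)^{\dagger\dagger}\in\MCM$ by the previous step. Surjectivity of $\eta_n$ holds already for $n\ge d-1$: the short exact sequence above is then one of maximal CM modules, so dualizing it yields a surjection $(X^{n-1})^{\dagger\dagger}\twoheadrightarrow(C^n)^{\dagger\dagger}$, and naturality of $\eta$ against the canonical surjection $X^{n-1}\twoheadrightarrow C^n$, together with $\eta_{X^{n-1}}$ being an isomorphism, forces $\eta_n$ onto. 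For injectivity it is enough --- by naturality of $\eta$ against $\mu^n$, again using that $\eta_{X^n}$ is an isomorphism --- that $\mu^n$ be injective, i.e.\ that $K^n=0$. To see this, apply $\Hom{-}{\dual}$ to the resolution of $N$ just built: via the identifications $(X^i)^{\dagger\dagger}\cong X^i$ (and naturality of $\eta$, which turns $(\delta^i)^{\dagger\dagger}$ into $\delta^i$) one recovers the complex $X^0\to X^1\to X^2\to\cdots$ with differentials $\delta^n$, and since each $(X^i)^\dagger$ is maximal CM, hence $\Hom{-}{\dual}$-acyclic, this complex computes $\Ext{n}{N}{\dual}$ in degree $n$. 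Its degree-$n$ cohomology for $n\ge1$ is $\Ker{\delta^n}/\Im{\delta^{n-1}}$, which equals $K^{n+1}$: indeed $\Ker{\delta^n}$ is the preimage of $\Ker{\mu^{n+1}}$ under $X^n\twoheadrightarrow C^{n+1}$, and $\Im{\delta^{n-1}}=\bar C^n$ is exactly the kernel of that surjection. Thus $K^m\cong\Ext{m-1}{N}{\dual}$ for $m\ge2$; since $\operatorname{id}_R\dual=d$ and $\operatorname{depth}N\ge2$, Ischebeck's theorem gives $\Ext{i}{N}{\dual}=0$ for $i>d-2$, so $K^m=0$ for $m\ge d$. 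Hence $\mu^n$ is injective and $\eta_n$ is bijective for $n\ge d$, so $C^n\cong(C^n)^{\dagger\dagger}\in\MCM$.

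The third paragraph is where I expect the real difficulty to lie. The first two paragraphs only ever see $C^n$ through its dual $(C^n)^\dagger$, and $(C^n)^\dagger\in\MCM$ does not by itself imply $C^n\in\MCM$ --- for instance $C^n$ may have a nonzero finite-length submodule, which $(-)^\dagger$ annihilates --- so one genuinely has to prove that the preenvelope $\mu^n$ becomes injective for large $n$. The identification $\Ker{\mu^n}\cong\Ext{n-1}{\Hom{M}{\dual}}{\dual}$ is the key technical ingredient, and the bound $\operatorname{depth}\Hom{M}{\dual}\ge2$ is what makes the threshold come out to exactly $d$ rather than something larger.
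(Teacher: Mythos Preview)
Your proof is correct and takes a genuinely different route from the paper. The paper reduces to the case $n=d$ and, for $d\geqslant 2$, first shows (exactly as you do) that $\da{(C^{d-2})}$ is maximal CM, but then invokes the structural \prpref{second-syzygy}: when $\da{M}$ is maximal CM, the biduality map $\delta_M$ is the $\MCM$-envelope of $M$ (\exaref{MCM-envelope-biduality}), its cokernel has vanishing $\dagger$-dual and hence dimension $<d$, so the minimal second cosyzygy is literally zero (\prpref{trivial-MCM-envelope}). You bypass these envelope-theoretic facts entirely and argue directly with an arbitrary proper $\MCM$-coresolution: you recognise the dualized coresolution as a $\Hom{-}{\dual}$-acyclic resolution of $N=\da{M}$, read off the explicit identification $\Ker{\mu^m}\cong\Ext{m-1}{N}{\dual}$, and kill this via $\depth{N}\geqslant 2$. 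What you gain is a self-contained argument yielding the pleasant formula for $\Ker{\mu^m}$; what the paper's approach buys is a sharper structural conclusion (the \emph{minimal} cosyzygies past degree $d-2$ are eventually zero, not merely maximal CM). One small naming quibble: the vanishing $\Ext{i}{N}{\dual}=0$ for $i>d-\depth{N}$ is the depth formula for the dualizing module \cite[Cor.~3.5.11]{BruHer}, not Ischebeck's theorem; the mathematics is unaffected.
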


\section{Preliminaries}

\begin{stp}
  \label{stp:setup}
  Throughout, $(R,\m,k)$ is a commutative noetherian local CM ring of Krull dimension $d$. It is assumed that $R$ has a dualizing (or canonical) module~$\dual$. 
\end{stp}

Let $M$ be a finitely generated $R$-module. The \emph{depth} of $M$ is the number
\begin{displaymath}
  \depth{M} \,=\, \inf\{i\,|\,\Ext{i}{k}{M} \neq 0\} \,\in\, \mathbb{N}_0 \cup \{\infty\}\;;
\end{displaymath}
see \cite[Def.~1.2.6 and 1.2.7]{BruHer}. If $M \neq 0$, then $\depth{M}$ is the common length of a maximal $M$-regular sequence (in $\m$). The depth can also be computed from the dualizing module:
\begin{displaymath}
  \depth{M} \,=\, d-\sup\{i\,|\,\Ext{i}{M}{\dual} \neq 0\}\;;
\end{displaymath}
see \cite[Cor.~3.5.11]{BruHer}. One calls $M$ for \emph{maximal CM} if $\depth{M} \geqslant d$, that is, $\Ext{i}{M}{\dual}=0$ for all $i>0$. The category of all such $R$-modules is denoted by $\MCM$.  The category of all finitely generated $R$-modules is denoted by $\mod$. 

We recall a few notions from relative homological algebra.

\begin{dfn}
  \label{dfn:preenvelope}
  Let $\mathcal{A}$ be a full subcategory of an abelian category $\mathcal{M}$ (e.g.~$\mathcal{M}= \mod$ and $\mathcal{A}=\MCM$), and let $M$ be an object in $\mathcal{M}$. Following Enochs and Jenda \cite[Def.~5.1.1]{rha}, a morphism $\pi \colon A \to M$ with $A \in \mathcal{A}$ is called an \emph{$\mathcal{A}$-precover}
(or a \emph{right $\mathcal{A}$-approximation}) of $M$ if every other morphism $\pi' \colon A' \to M$ with $A' \in \mathcal{A}$ factors through $\pi$, as illustrated below.
\begin{displaymath}
  \xymatrix{
    {} & A' \ar@{-->}[dl] \ar[d]^-{\pi'} \\
    A \ar[r]^-{\pi} & M
   }
\end{displaymath}
A \emph{special $\mathcal{A}$-precover} (or a \emph{special right $\mathcal{A}$-approximation}) is an $\mathcal{A}$-precover $\pi \colon A \to M$ such that $\Ext[\mathcal{M}]{1}{A'}{\Ker{\pi}}=0$ for every $A' \in \mathcal{A}$. An \emph{$\mathcal{A}$-cover} (or a \emph{minimal right $\mathcal{A}$-approxi\-mation}) is an $\mathcal{A}$-precover $\pi$ with the property that every endomorphism $\varphi$ of $A$ that satisfies $\pi\varphi=\pi$ is an automorphism.

The notions of \emph{$\mathcal{A}$-preenvelope} (or \emph{left $\mathcal{A}$-approximation}), \emph{special $\mathcal{A}$-preenvelope} (or \emph{special left $\mathcal{A}$-approximation}), and \emph{$\mathcal{A}$-envelope} (or \emph{minimal left $\mathcal{A}$-approximation}) are categorically dual to the notions defined above. 

By definition, a special $\mathcal{A}$-precover/preenvelope is also an (ordinary) $\mathcal{A}$-precover/pre\-en\-ve\-lope. If $\mathcal{A}$ is closed under extensions in $\mathcal{M}$, then every $\mathcal{A}$-cover/envelope is a special $\mathcal{A}$-precover/preenvelope; this is the content of Wakamatsu's lemma\footnote{\ Wakamatsu's lemma is implicitly in \cite{Wakamatsu88} by Wakamatsu. It is explicitly stated in Auslander and Reiten \cite[lem.~1.3]{MAsIRt91}, but without a proof. It is stated and proved in Xu \cite[lem.~2.1.1 and 2.1.2]{xu}.}.
\end{dfn}

\begin{ipg}
  \label{MCM-duality}
It is well-known that the dualizing module $\dual$ gives rise to a duality on the category of maximal CM modules; more precisely, there is an equivalence of categories:
\begin{displaymath}
  \xymatrix@C=5pc{
    \MCM \ar@<0.6ex>[r]^-{\Hom{-}{\dual}} & \MCM^\mathrm{op}\;. \ar@<0.6ex>[l]^-{\Hom{-}{\dual}} 
  }
\end{displaymath}
We use the shorthand notation $(-)^\dagger$ for the functor $\Hom{-}{\dual}$. For any finitely generated $R$-module $M$ there is a canonical homomorphism $\delta_M \colon M \to M^{\dagger\dagger}$, called the \emph{biduality homomorphism}, which is natural in $M$. Because of the equivalence above, $\delta_M$ is an isomorphism if $M$ belongs to $\MCM$; cf.~\cite[Thm.~3.3.10]{BruHer}.
\end{ipg}

We will need the following result about depth; it is folklore and easily proved\footnote{\ One way to prove \lemref{depth} is by induction on $m$, using the behaviour of depth on short exact sequences recorded in Bruns and Herzog \cite[Prop.~1.2.9]{BruHer}.}.

\begin{lem}
  \label{lem:depth}
Let $m \geqslant 0$ be an integer and let $\,0 \to K_{m} \to X_{m-1} \to \cdots \to X_0 \to M \to 0$ be an exact sequence of finitely generated $R$-modules. If $X_0,\ldots,X_{m-1}$ are maximal CM, then one has $\depth{K_m} \geqslant \min\{d,\depth{M}+m\}$. In particular, if $m \geqslant d$ then the $R$-module $K_m$ is maximal CM. \qed
\end{lem}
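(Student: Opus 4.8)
The plan is to proceed by induction on $m$, using the standard behaviour of depth in short exact sequences as recorded in Bruns and Herzog \cite[Prop.~1.2.9]{BruHer}: for a short exact sequence $0 \to A \to B \to C \to 0$ of finitely generated $R$-modules one has $\depth{A} \geqslant \min\{\depth{B}, \depth{C}+1\}$. The base case $m=0$ is vacuous (or trivial): the sequence reads $0 \to K_0 \to M \to 0$, so $K_0 \cong M$ and $\depth{K_0} = \depth{M} \geqslant \min\{d, \depth{M}\}$.

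For the inductive step, suppose the claim holds for exact sequences of length $m-1$ and consider $0 \to K_m \to X_{m-1} \to \cdots \to X_0 \to M \to 0$ with all $X_i$ maximal CM. First I would break off the leftmost map: let $K_1 = \Im(X_1 \to X_0) = \Ker(X_0 \to M)$, so that we have a short exact sequence $0 \to K_1 \to X_0 \to M \to 0$ and an exact sequence $0 \to K_m \to X_{m-1} \to \cdots \to X_1 \to K_1 \to 0$ of length $m-1$. Applying Bruns--Herzog to the short exact sequence and using $\depth{X_0} \geqslant d$ gives $\depth{K_1} \geqslant \min\{d, \depth{M}+1\}$. Then the induction hypothesis applied to the length-$(m-1)$ sequence yields
\begin{displaymath}
  \depth{K_m} \,\geqslant\, \min\{d, \depth{K_1} + (m-1)\} \,\geqslant\, \min\{d,\ \min\{d,\depth{M}+1\} + (m-1)\}\,.
\end{displaymath}
Since $\min\{d,\depth{M}+1\} + (m-1) \geqslant \min\{d + (m-1),\ \depth{M}+m\} \geqslant \min\{d,\depth{M}+m\}$ (using $m \geqslant 1$), the inner minimum is at least $\min\{d, \depth{M}+m\}$, and combining with the outer $d$ we get exactly $\depth{K_m} \geqslant \min\{d,\depth{M}+m\}$, as desired.

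For the final sentence: if $m \geqslant d$, then $\depth{M}+m \geqslant m \geqslant d$, so $\min\{d, \depth{M}+m\} = d$, whence $\depth{K_m} \geqslant d$; since always $\depth{K_m} \leqslant \dim R = d$ for $K_m \neq 0$ (and the zero module is trivially maximal CM under the usual convention), $K_m$ is maximal CM. The only mild subtlety — and the place to be slightly careful — is the bookkeeping with the minima and the edge cases where some module vanishes or has infinite depth; these are handled by the conventions $\depth{0} = \infty$ and $\min\{d,\infty\} = d$, which make every inequality above go through unchanged. No genuine obstacle arises; the lemma is, as the footnote in the excerpt says, folklore.
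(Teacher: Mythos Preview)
Your proof is correct and follows exactly the approach the paper indicates: the lemma is stated without proof (it ends with a \qed), but a footnote says it can be proved ``by induction on $m$, using the behaviour of depth on short exact sequences recorded in Bruns and Herzog \cite[Prop.~1.2.9]{BruHer}'', which is precisely what you do. There is nothing to add.
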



\section{Special $\MCM$-preenvelopes and $\MCM$-envelopes}
\label{sec:MCM-envelopes}

In this section, we prove Theorem~A from the Introduction. Our proof follows that of \cite[Thm.~1.6]{HJ11} with some adjustments.

\begin{lem}
  \label{lem:split}
  For every $R$-module $M$, the composition \smash{\mbox{\!$\xymatrix@C=1.5pc{
       \da{M} \ar[r]^-{\delta_{\da{M}}} & 
       \ddda{M} \ar[r]^-{\da{\delta_{M}}} & \da{M}}$\!}} is the identity map on $\da{M}$.
\end{lem}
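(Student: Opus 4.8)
The statement is a special case of the standard ``triangle identity'' that holds for any biduality/unit-type natural transformation, so the plan is to reduce it to naturality of $\delta$ together with the trivial fact that applying $(-)^\dagger$ to the identity gives the identity. Concretely, let $N$ be any finitely generated $R$-module. First I would recall the naturality square for $\delta$: for every homomorphism $f\colon N\to N'$ one has $\delta_{N'}\circ f=\da{\da f}\circ\delta_N$, i.e.\ $(-)^{\dagger\dagger}$ is a functor and $\delta$ is a natural transformation from the identity functor to $(-)^{\dagger\dagger}$. Next I would observe the elementary identity $\da{\delta_N}\circ\delta_{\da N}=\operatorname{id}_{\da N}$; this is what we must prove, and the trick is to evaluate the naturality square on a cleverly chosen map. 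Apply naturality to $f=\delta_N\colon N\to\da{\da N}$: this gives $\delta_{\da{\da N}}\circ\delta_N=\da{\da{\delta_N}}\circ\delta_N$, which is not yet what we want; instead I apply $(-)^\dagger$ to the whole picture.

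The cleanest route: write out, for an arbitrary element $\varphi\in\da N=\Hom{N}{\dual}$ and $x\in N$, what each composite does. By definition $\delta_{\da N}(\varphi)$ is the element of $\da{\da{\da N}}$ sending $\psi\in\da{\da N}$ to $\psi(\varphi)$. Then $\da{\delta_N}$ is precomposition with $\delta_N\colon N\to\da{\da N}$, so $\big(\da{\delta_N}\circ\delta_{\da N}\big)(\varphi)$ is the element of $\da N$ sending $x\in N$ to $\delta_{\da N}(\varphi)\big(\delta_N(x)\big)=\delta_N(x)(\varphi)=\varphi(x)$. Hence $\da{\delta_N}\circ\delta_{\da N}$ sends $\varphi\mapsto(x\mapsto\varphi(x))=\varphi$, i.e.\ it is the identity on $\da N$. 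Taking $N=M$ finishes the proof.

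The only mild subtlety — and really the only ``obstacle'' — is that $\dual$ is a module, not a field, so one must be slightly careful that the evaluation maps and the identification $\Hom{N}{\dual}$-as-a-module behave as in the vector-space case; but since the argument above is purely formal (it uses nothing about $\dual$ beyond the definition of $\delta$ as the evaluation/biduality map), it goes through verbatim. Alternatively, one can phrase it without elements: it is the triangle identity for the adjunction-like pair $(-)^\dagger\dashv(-)^\dagger$ coming from $\Hom{-}{\dual}$, a formal consequence of the definitions, and one may simply cite this. I would write the short element-wise computation above, as it is self-contained and requires no machinery beyond \ref{MCM-duality}.
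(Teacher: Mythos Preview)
Your proposal is correct and, once you settle on the element-wise computation, it is line-for-line the same argument as the paper's: pick $\varphi\in\da{M}$ and $x\in M$, and compute $(\da{\delta_M}\circ\delta_{\da M})(\varphi)(x)=\delta_{\da M}(\varphi)(\delta_M(x))=\delta_M(x)(\varphi)=\varphi(x)$. The preliminary discussion of naturality squares and triangle identities is unnecessary scaffolding that you yourself abandon; you can safely delete it and present just the three-line computation.
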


\begin{proof}
  Let $\varphi$ be an arbitrary element in $\da{M} = \Hom{M}{\dual}$. We must show that the element $(\da{\delta_{M}} \circ \delta_{\da{M}})(\varphi) = \delta_{\da{M}}(\varphi) \circ \delta_M$ is nothing but $\varphi$, that is, we must prove that for every $x \in M$ one has $(\delta_{\da{M}}(\varphi) \circ \delta_M)(x) = \varphi(x)$. The definitions yield
\begin{displaymath}
  (\delta_{\da{M}}(\varphi) \circ \delta_M)(x) 
  \,=\, \delta_{\da{M}}(\varphi)\big(\delta_M(x)\big)
  \,=\, \delta_M(x)(\varphi)
  \,=\, \varphi(x)\,. \qedhere
\end{displaymath}
\end{proof}

\begin{lem}
  \label{lem:Ext}
  For every finitely generated $R$-module $M$, the next conditions are equivalent.
  \begin{eqc}
  \item $\Ext{1}{M}{\dual}=0$ and $\Ext{1}{X}{\da{M}}=0$ for every $X \in \MCM$.
  \item $\Ext{1}{M}{Y}=0$ for every $Y \in \MCM$.
  \end{eqc}
\end{lem}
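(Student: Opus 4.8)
The plan is to prove both implications by manipulating short exact sequences under the dagger functor and exploiting the duality on $\MCM$ recorded in \eqref{MCM-duality}. The key structural input is that for a finitely generated $R$-module $M$ there is a canonical short exact sequence relating $M$, a maximal CM module, and something detecting $\Ext{1}{M}{\dual}$. Concretely, choose a maximal CM approximation $0 \to I \to X \to M \to 0$ of $M$ (which exists by the Auslander--Buchweitz theorem quoted in the Introduction), where $X \in \MCM$ and $I$ has finite injective dimension; since $\Ext{i}{Y}{I}=0$ for all $Y \in \MCM$ and all $i>0$ by Ischebeck's result, applying $\Hom{Y}{-}$ to this sequence yields $\Ext{1}{Y}{M} \cong \Ext{1}{Y}{X}=0$ automatically — but note that $\Ext{1}{M}{Y}$ (the direction we care about) is not directly controlled this way, so instead I would work with the dagger.

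First I would prove \proofofimp{ii}{i}. Suppose $\Ext{1}{M}{Y}=0$ for all $Y \in \MCM$. Taking $Y=\dual \in \MCM$ immediately gives $\Ext{1}{M}{\dual}=0$, which is the first half of \eqclbl{i}. For the second half, fix $X \in \MCM$; I want $\Ext{1}{X}{\da{M}}=0$. Here I would use the duality: since $X \in \MCM$ we have $X \cong \da{(\da{X})}$ with $\da{X} \in \MCM$, and there should be an adjunction-type isomorphism $\Ext{1}{X}{\da{M}} = \Ext{1}{X}{\Hom{M}{\dual}} \cong \Ext{1}{M}{\Hom{X}{\dual}} = \Ext{1}{M}{\da{X}}$ — this is the standard swap $\Hom{A}{\Hom{B}{\dual}} \cong \Hom{B}{\Hom{A}{\dual}}$ applied to a projective resolution of one variable, valid because $\dual$ has finite injective dimension so higher $\operatorname{Ext}$'s into $\dual$ do not obstruct the spectral-sequence/dimension-shift argument when one of the arguments is maximal CM. (Alternatively: resolve $X$ by free modules $F_\bullet \to X$; then $\Hom{F_\bullet}{\da{M}} = \Hom{F_\bullet}{\Hom{M}{\dual}} \cong \Hom{M}{\Hom{F_\bullet}{\dual}} = \Hom{M}{\da{F_\bullet}}$, and $\da{F_\bullet}$ is a bounded complex of maximal CM modules quasi-isomorphic to $\da{X}$ since $X$ is maximal CM, so the hypercohomology in degree $1$ computes $\Ext{1}{M}{\da{X}}$.) Since $\da{X} \in \MCM$, the hypothesis \eqclbl{ii} gives $\Ext{1}{M}{\da{X}}=0$, hence $\Ext{1}{X}{\da{M}}=0$.

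Next I would prove \proofofimp{i}{ii}, which I expect to be the harder direction. Assume \eqclbl{i}. Let $Y \in \MCM$ be arbitrary; I must show $\Ext{1}{M}{Y}=0$. The idea is to replace $Y$ by $\da{(\da{Y})}$ via biduality \eqref{MCM-duality} and then run the swap in the opposite direction: $\Ext{1}{M}{Y} \cong \Ext{1}{M}{\da{(\da{Y})}} \cong \Ext{1}{\da{Y}}{\da{M}}$, using the same $\Hom$-swap isomorphism as above but now with the maximal CM module on the other side. Since $\da{Y} \in \MCM$, the second hypothesis in \eqclbl{i} gives $\Ext{1}{\da{Y}}{\da{M}}=0$, and we are done — except that the $\Hom$-swap isomorphism $\Ext{1}{M}{\da{Z}} \cong \Ext{1}{Z}{\da{M}}$ for $Z \in \MCM$ must be justified carefully, and this is where the assumption $\Ext{1}{M}{\dual}=0$ (the first half of \eqclbl{i}) enters: resolving $M$ by free modules $P_\bullet$, one has $\Hom{P_\bullet}{\da{Z}} \cong \Hom{Z}{\da{P_\bullet}}$, but to identify the degree-$1$ cohomology of $\Hom{Z}{\da{P_\bullet}}$ with $\Ext{1}{Z}{\da{M}}$ one needs $\da{P_\bullet}$ to be a resolution of $\da{M}$ through the relevant range, i.e. one needs $\Ext{1}{M}{\dual}=0$ (and this is exactly the low-degree obstruction; higher-degree vanishing is not needed because we only look at $\operatorname{Ext}^1$).

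The main obstacle is therefore the bookkeeping in the $\Hom$-swap / dimension-shifting argument: making precise, with only $\operatorname{Ext}^1$-vanishing hypotheses rather than full maximal-CM-ness of $M$, that $\Ext{1}{M}{\da{Z}} \cong \Ext{1}{Z}{\da{M}}$ for every $Z \in \MCM$ in both directions. I would handle this by taking a free presentation $P_1 \to P_0 \to M \to 0$, letting $N = \Ker(P_0 \to M)$ (equivalently a syzygy), and chasing the two exact sequences obtained by applying $\Hom{-}{\da{Z}}$ to $0 \to N \to P_0 \to M \to 0$ and $\Hom{Z}{-}$ to $\da{P_0} \to \da{N}$, comparing them via the natural $\Hom$-swap isomorphisms on the free terms and using $\Ext{1}{M}{\dual}=0$ to control the relevant connecting maps. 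This is elementary but must be written with care; everything else reduces to invoking the duality of \eqref{MCM-duality} and feeding in the two halves of \eqclbl{i} or the single hypothesis \eqclbl{ii}.
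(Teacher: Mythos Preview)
Your proposal is correct, and the swap isomorphism you aim for,
\[
\Ext{1}{M}{\da{Z}} \;\cong\; \Ext{1}{Z}{\da{M}} \qquad (Z \in \MCM,\ \Ext{1}{M}{\dual}=0),
\]
can indeed be established exactly along the lines of your final paragraph: from a short exact sequence $0 \to N \to P_0 \to M \to 0$ with $P_0$ free, dualize (using $\Ext{1}{M}{\dual}=0$) to get $0 \to \da{M} \to \da{P_0} \to \da{N} \to 0$, apply $\Hom{Z}{-}$, and compare via the natural Hom-swap with $\Hom{-}{\da{Z}}$ applied to the original sequence; the term $\Ext{1}{Z}{\da{P_0}}$ vanishes because $Z$ is maximal CM and $\da{P_0}$ is a sum of copies of $\dual$. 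The hypercohomology remark in your \eqclbl{ii}$\Rightarrow$\eqclbl{i} sketch is heavier than needed (the $\da{F_i}$ are not $\Hom{M}{-}$-acyclic in general), but the one-step syzygy version you outline at the end handles both directions cleanly.

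The paper takes a different route: rather than establishing a swap isomorphism at the level of $\operatorname{Ext}^1$, it works directly with Yoneda extensions. For \eqclbl{i}$\Rightarrow$\eqclbl{ii} it takes an extension $0 \to Y \to E \to M \to 0$, applies $\da{(-)}$ (exact by the hypothesis $\Ext{1}{M}{\dual}=0$), splits the resulting sequence using $\Ext{1}{\da{Y}}{\da{M}}=0$, and then transports the splitting back through biduality to produce an explicit left inverse of $Y \to E$. The direction \eqclbl{ii}$\Rightarrow$\eqclbl{i} is handled symmetrically, with \lemref{split} supplying the needed identity $\da{\delta_M}\delta_{\da{M}}=1_{\da{M}}$. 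Your approach is more functorial and yields a reusable isomorphism of $\operatorname{Ext}$-groups; the paper's approach is more elementary, avoiding resolutions and any spectral-sequence-flavoured bookkeeping, at the cost of writing down explicit inverses.
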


\begin{proof}
  \proofofimp{i}{ii} Assume \eqclbl{i}. Given any $Y \in \MCM$ we must argue that $\Ext{1}{M}{Y}=0$, i.e.~that every short exact sequence \smash{$\mspace{1mu}0 \to Y \stackrel{\alpha}{\to} E \to M \to 0\mspace{1mu}$} splits. As $\Ext{1}{M}{\dual}=0$, the functor $\da{(-)}$ leaves this sequence exact; in fact, the induced short exact sequence
\begin{displaymath}
   0 \longrightarrow \da{M} \longrightarrow \da{E} \stackrel{\da{\alpha}}{\longrightarrow} \da{Y} \longrightarrow 0
\end{displaymath}
splits as $\da{Y}$ belongs to $\MCM$ and hence $\Ext{1}{\da{Y}}{\da{M}}=0$ by assumption. Let \mbox{$\beta \colon \da{Y} \to \da{E}$} be a right inverse of $\da{\alpha}$. Then $\delta_Y^{-1}\da{\beta}\delta_E \colon E \to Y$ is a left inverse of $\alpha$ since one has
\begin{displaymath}
  \delta_Y^{-1}\da{\beta}\delta_E\alpha 
  \,=\, 
  \delta_Y^{-1}\da{\beta}\dda{\alpha}\delta_Y
  \,=\, 
  \delta_Y^{-1}\da{(\da{\alpha}\beta)}\delta_Y
  \,=\, 
  \delta_Y^{-1}1_{\dda{Y}}\delta_Y
  \,=\, 
  1_{Y}\;.  
\end{displaymath}

\proofofimp{ii}{i} Assume \eqclbl{ii}. This assumption implies that $\Ext{1}{M}{\dual}=0$ since 
$\dual \in \MCM$. Given $X \in \MCM$ we must show that $\Ext{1}{X}{\da{M}}=0$, i.e.~that every short exact sequence \smash{$0 \to \da{M} \stackrel{\alpha}{\to} E \to X \to 0$} splits. Since $X$ is in $\MCM$ we have, in particular, $\Ext{1}{X}{\dual}=0$, so application of the functor $\da{(-)}$ yields another short exact sequence:
\begin{displaymath}
  \tag{\text{$*$}}
   0 \longrightarrow \da{X} \longrightarrow \da{E} \stackrel{\da{\alpha}}{\longrightarrow} \dda{M} \longrightarrow 0\;.
\end{displaymath}
As $\da{X}$ belongs to $\MCM$ we have $\Ext{1}{M}{\da{X}}=0$, so the functor $\Hom{M}{-}$ leaves the sequence ($*$) exact. Surjectivity of $\Hom{M}{\da{\alpha}}$ yields a homomorphism $\beta \colon M \to \da{E}$ with $\da{\alpha}\beta = \delta_M$. It follows that $\da{\beta}\delta_E \colon E \to \da{M}$ is a left inverse of $\alpha$ since one has
\begin{displaymath}
  \da{\beta}\delta_E\alpha 
  \,=\, 
  \da{\beta}\dda{\alpha}\delta_{\da{M}}
  \,=\, 
  \da{(\da{\alpha}\beta)}\delta_{\da{M}}
  \,=\, 
  \da{\delta_M}\mspace{1mu}\delta_{\da{M}}
  \,=\, 
  1_{\da{M}}\;,
\end{displaymath}
where the last equality follows from \lemref{split}.
\end{proof}

\begin{proof}[Proof of Theorem~A]
  We begin by proving the last assertion in the theorem. Let $\mu \colon M \to X$ be any special $\MCM$-preenvelope of $M$. By assumption, we have $\Ext{1}{\Coker{\mu}}{Y}=0$ for every $Y \in \MCM$, so   \lemref{Ext} implies that $\Ext{1}{Z}{\da{(\Coker{\mu})}}=0$ for every $Z \in \MCM$. By Auslander and Buchweitz \cite[Thm.~A]{MAsROB89}, we can take a \emph{hull of finite injective dimension} for the finitely generated module $\da{(\Coker{\mu})}$, that is, a short exact sequence,
\begin{displaymath}
    0 \longrightarrow \da{(\Coker{\mu})} \longrightarrow I \longrightarrow Z \longrightarrow 0\;,
\end{displaymath}
where $I$ has finite injective dimension and $Z$ is maximal CM. As $\Ext{1}{Z}{\da{(\Coker{\mu})}}=0$, this sequence splits, and $\da{(\Coker{\mu})}$ is therefore a direct summand in $I$. Since $I$ has finite injective dimension, so has $\da{(\Coker{\mu})}$.

  To prove parts \prtlbl{a} and \prtlbl{b}, let $M$ be a finitely generated $R$-module and let $\pi \colon Z \to \da{M}$ be a homomorphism with $Z \in \MCM$. We will show that if $\pi$ is a special $\MCM$-precover, respectively, an $\MCM$-cover\footnote{\ By the theorem of Auslander and Buchweitz, Takahashi, and Yoshino mentioned in the Introduction, special $\MCM$-precovers always exist, and $\MCM$-covers exist if $R$ is henselian}, of $\da{M}$ then the homomorphism 
\begin{displaymath}
  \mu := \da{\pi}\delta_M \colon M \longrightarrow \da{Z}
\end{displaymath}
is a special $\MCM$-preenvelope, respectively, an $\MCM$-envelope, of $M$.

First assume that $\pi$ is a special $\MCM$-precover. We begin by proving that $\mu$ is an $\MCM$-preenvelope. Note that $\da{Z}$ is in $\MCM$ by \ref{MCM-duality}. We must show that $\Hom{\mu}{Y}$ is surjective for every $Y \in \MCM$. By \ref{MCM-duality} every such $Y$ has the form $Y \cong \da{X}$ for some $X \in \MCM$ (namely for $X=\da{Y}$), so it suffices to show that
$\Hom{\mu}{\da{X}}$ is surjective for every $X \in \MCM$. By definition of $\mu$, the homomorphism $\Hom{\mu}{\da{X}}$ is the composition of the maps
  \begin{displaymath}
    \tag{\text{$*$}}
    \xymatrix@C=4.2pc{
       \Hom{\da{Z}}{\da{X}} \ar[r]^-{\Hom{\da{\pi}}{\da{X}}} & 
       \Hom{\dda{M}}{\da{X}} \ar[r]^-{\Hom{\delta_M}{\da{X}}} & \Hom{M}{\da{X}}
    }.
  \end{displaymath}
Via the ``swap'' isomorphism, see Christensen \cite[(A.2.9)]{lnm}, the homomorphisms in ($*$) are identified with the ones in the top row of the following diagram:
\begin{displaymath}
  \tag{\text{$**$}}
  \begin{gathered}
  \xymatrix@C=4.2pc{
     \Hom{X}{\dda{Z}} \ar[r]^-{\Hom{X}{\dda{\pi}}} & 
     \Hom{X}{\ddda{M}} \ar[r]^-{\Hom{X}{\da{\delta_M}}} & \Hom{X}{\da{M}}
     \\
     \Hom{X}{Z} \ar[u]^-{\Hom{X}{\delta_Z}}_-{\cong} \ar@{->>}[r]^-{\Hom{X}{\pi}}& 
     \Hom{X}{\da{M}} \ar[u]^-{\Hom{X}{\delta_{\da{M}}}} \hspace{2ex} \ar@<-1ex>@{=}[ur]
& 
  }
  \end{gathered}
\end{displaymath}
The left square in ($**$) is commutative since the biduality homomorphism $\delta$ is natural, and the right triangle in ($**$) is commutative by \lemref{split}. The map $\delta_Z$ is an isomorphism since $Z$ is in $\MCM$; and $\Hom{X}{\pi}$ is surjective as $\pi$ is an $\MCM$-precover and $X \in \MCM$. It follows that the composition of the maps in the top row of ($**$), and therefore also the map $\Hom{\mu}{\da{X}}$, is surjective. Thus, $\mu$ is an $\MCM$-preenvelope.

To see that $\mu$ is a special $\MCM$-preenvelope, we must prove that $\Ext{1}{\Coker{\mu}}{Y}=0$ for every $Y \in \MCM$. As the functor $\da{(-)}$ is left exact, $\da{(\Coker{\mu})}$ is isomorphic to $\Ker{(\da{\mu})}$. By definition we have \smash{$\da{\mu} = \da{\delta_M}\dda{\pi}$}, and hence $\da{\mu}$ fits into the commutative diagram:
\begin{displaymath}
  \tag{\text{$*\mspace{-2.5mu}*\mspace{-2.5mu}*$}}
  \begin{gathered}
  \xymatrix@R=1.7pc{
    \dda{Z} \ar[r]^-{\da{\mu}} & \da{M} \\
    \dda{Z} \ar@{=}[u] \ar[r]^-{\dda{\pi}} & \ddda{M} \ar[u]_-{\da{\delta_M}} \\ 
    Z \ar[u]^-{\delta_Z}_-{\cong} \ar[r]^-{\pi} & \da{M} \ar[u]_-{\delta_{\da{M}}}
    \ar@/_2.5pc/[uu]_-{1_{\da{M}} \quad \text{(By \lemref{split})}}
  }
  \end{gathered}
\end{displaymath}
It follows that $\da{\mu}$ and $\pi$ are isomorphic maps, and hence they also have isomorphic kernels, that is, 
$\Ker{(\da{\mu})} \cong \Ker{\pi}$. It follows that $\da{(\Coker{\mu})} \cong \Ker{\pi}$. Since $\pi$ is a special $\MCM$-precover, we now have
\begin{displaymath}
  \Ext{1}{X}{\da{(\Coker{\mu})}} \,\cong\, \Ext{1}{X}{\Ker{\pi}} \,=\, 0 
\end{displaymath}
for every $X \in \MCM$. Thus, to see that $\Ext{1}{\Coker{\mu}}{Y}=0$ for every $Y \in \MCM$, it suffices by \lemref{Ext} to prove that $\Ext{1}{\Coker{\mu}}{\dual}=0$. To this end, set $X=\da{Z} \in \MCM$ and consider the factorization of $\mu \colon M \to \da{Z}=X$ given by
\begin{displaymath}
  \xymatrix@!=0.5pc{
     M \ar@{->>}[dr]_-{\mu_0} \ar[rr]^-{\mu} & & X \\
     {} & \Im{\mu} \ar@{^(->}[ur]_-{\iota} & 
  }
\end{displaymath}
where $\mu_0$ is the corestriction of $\mu$ to its image and $\iota$ is the inclusion map. As $\mu_0$ is surjective and $\da{(-)}$ is left exact, the map $\da{\mu_0}$ is injective. As $\dual \in \MCM$ and $\mu$ is an $\MCM$-preenvelope, the map $\da{\mu} = \Hom{\mu}{\dual}$ is surjective; and hence so is $\da{\mu_0}$ since $\da{\mu} = \da{\mu_0}\,\da{\iota}$. Thus, $\da{\mu_0}$ is an isomorphism. Hence $\da{\iota}$ and $\da{\mu}$ are isomorphic maps, and since $\da{\mu}$ is surjective, so is~$\da{\iota}$. Thus, application of $\da{(-)}$ to \smash{$\mspace{1mu}0 \to \Im{\mu} \stackrel{\iota}{\to} X \to \Coker{\mu} \to 0\mspace{1mu}$} yields an exact sequence,
\begin{displaymath}
  \xymatrix@C=1.5pc{
     \da{X} \ar[r]^-{\da{\iota}} & \da{(\Im{\mu})} \ar[r]^-{0} & \Ext{1}{\Coker{\mu}}{\dual} \ar[r] & \Ext{1}{X}{\dual}=0
  },
\end{displaymath}
which forces $\Ext{1}{\Coker{\mu}}{\dual}=0$, as desired.

Finally, assume that $\pi$ is an $\MCM$-cover. We show that $\mu = \da{\pi}\delta_M$ is an $\MCM$-envelope. We have already seen that $\mu$ is an $\MCM$-preenvelope. To show that it is an envelope, let $\varphi$ be an endomorphism of $\da{Z}$ with $\varphi\mu=\mu$. It follows that $\da{\mu}\da{\varphi}=\da{\mu}$. The diagram \mbox{($*\mspace{-2.5mu}*\mspace{-2.5mu}*$)} shows that $\da{\mu}\delta_Z = \pi$, and thus $\pi(\delta_Z^{-1}\da{\varphi}\delta_Z) = \da{\mu}\da{\varphi}\delta_Z
= \da{\mu}\delta_Z = \pi$. As $\pi$ is an $\MCM$-cover, we conclude that $\delta_Z^{-1}\da{\varphi}\delta_Z$, and therefore also $\da{\varphi}$, is an automorphism. It follows that $\dda{\varphi}$ is an automorphism of $\ddda{Z}$, and finally that \smash{$\varphi = \delta_{\da{Z}}^{-1}\dda{\varphi}\delta_{\da{Z}}$} is an automorphism of $\da{Z}$.
\end{proof}

\section{Examples}
\label{sec:Examples}

We compute the $\MCM$-envelope of some specific modules. We begin with a characterization of modules with trivial $\MCM$-envelope.

\begin{prp}
  \label{prp:trivial-MCM-envelope}
  For a finitely generated $R$-module $M$, one has $\operatorname{dim}_RM<d$ if and only if 
  the zero map $M \to 0$ is an $\MCM$-envelope of $M$.
\end{prp}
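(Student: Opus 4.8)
The plan is to reduce the statement to a vanishing condition for $\Hom$-groups and then treat the two implications separately. First, note that the zero module belongs to $\MCM$, and that an $\MCM$-preenvelope $\mu\colon M\to 0$ is automatically an $\MCM$-envelope: the only endomorphism of $0$ is the identity, which is an automorphism, so the minimality condition in the definition of an envelope is vacuous. By \dfnref{preenvelope}, the zero map $M\to 0$ is an $\MCM$-preenvelope exactly when every homomorphism $M\to X$ with $X\in\MCM$ factors through $0$, i.e.\ exactly when $\Hom{M}{X}=0$ for every $X\in\MCM$. Thus it suffices to prove that $\dim_R M<d$ holds if and only if $\Hom{M}{X}=0$ for all $X\in\MCM$.

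For the implication ``$\Rightarrow$'', suppose $\dim_R M<d$ and let $f\colon M\to X$ be a homomorphism with $X\in\MCM$. If $f\neq 0$, then $N=\Im{f}$ is a nonzero submodule of $X$. Now $X$, being a nonzero maximal CM module, is Cohen--Macaulay of dimension $d$ and hence unmixed: every associated prime $\p$ of $X$ satisfies $\dim R/\p=d$ (Cohen--Macaulay modules have no embedded primes and are equidimensional; see \cite{BruHer}). Since $\operatorname{Ass}_R N\subseteq\operatorname{Ass}_R X$ and $N\neq 0$, this forces $\dim_R N=d$; but $N$ is a homomorphic image of $M$, so $\dim_R N\leqslant\dim_R M<d$ --- a contradiction. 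Hence $f=0$, and $\Hom{M}{X}=0$ for all $X\in\MCM$.

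For the implication ``$\Leftarrow$'', I would argue by contraposition: assuming $\dim_R M=d$ (so in particular $M\neq 0$), I must produce a nonzero homomorphism from $M$ to some module in $\MCM$, which witnesses that $M\to 0$ is not an $\MCM$-preenvelope. The obvious target is $\dual$, which lies in $\MCM$; so it is enough to show $\Hom{M}{\dual}\neq 0$. This is the one point with real content. It follows from local duality, under which $\Hom{M}{\dual}$ is the Matlis dual of the top local cohomology module $H^{d}_{\m}(M)$, together with Grothendieck's non-vanishing theorem, which gives $H^{d}_{\m}(M)\neq 0$ precisely because $\dim_R M=d$. Alternatively, one may invoke the dimension analogue of the depth formula recalled in the preliminaries, namely $\dim_R M=d-\inf\{\,i\mid\Ext{i}{M}{\dual}\neq 0\,\}$ for $M\neq 0$ (see \cite{BruHer}), which immediately yields $\Hom{M}{\dual}=\Ext{0}{M}{\dual}\neq 0$. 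Any nonzero element of $\Hom{M}{\dual}$ is then a homomorphism $M\to\dual$ that does not factor through the zero map.

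The reduction in the first paragraph and the support/dimension bookkeeping in the ``$\Rightarrow$'' direction are routine. I expect the only genuine obstacle to be the non-vanishing $\Hom{M}{\dual}\neq 0$ when $\dim_R M=d$, and specifically the choice of the most economical citable form of it (local duality, or the $\Ext$--dimension formula over a Cohen--Macaulay ring with a canonical module); everything else is formal.
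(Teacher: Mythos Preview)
Your proof is correct. The reduction in the first paragraph and the ``$\Leftarrow$'' direction match the paper's argument almost verbatim: both test against the single target $\dual\in\MCM$ and invoke \cite[Cor.~3.5.11]{BruHer} to obtain $\Hom{M}{\dual}\neq 0$ when $\dim_R M=d$. For ``$\Rightarrow$'' the paper takes a slightly different path: rather than analyzing associated primes of an MCM module, it first uses \cite[Cor.~3.5.11(a)]{BruHer} to get $\Hom{M}{\dual}=0$ directly from $\dim_R M<d$, and then exploits the fact that $\dual$ cogenerates $\MCM$ (every $X\in\MCM$ embeds into some $\dual^n$) to conclude $\Hom{M}{X}=0$ for all $X\in\MCM$. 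Your unmixedness argument is a self-contained alternative that bypasses the cogenerator fact; the paper's route has the minor advantage of making both implications rest on the same citation.
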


\begin{proof}
  If $\operatorname{dim}_RM<d$ then \cite[Cor.~3.5.11(a)]{BruHer} shows that $\Hom{M}{\dual}=0$. It follows that every homomorphism $\varphi \colon M \to X$ with $X \in \MCM$ is zero. Indeed, since $\dual$ cogenerates the category $\MCM$, there exists a monomorphism $\iota \colon X \to \dual^n$ for some natural number $n$. As $\Hom{M}{\dual}=0$, the homomorphism $\iota\varphi \colon M \to \dual^n$ must be zero, and thus $\varphi=0$ since $\iota$ is injective. Since every homomorphism from $M$ to a maximal CM module is zero, the zero map $M \to 0$ is an $\MCM$-envelope of $M$.
  
  Conversely, if $M \to 0$ is an $\MCM$-(pre)envelope then, since $\dual$ is in $\MCM$, every homomorphism $\varphi \colon M \to \dual$ factors through $0$, and hence $\varphi=0$. Thus $\Hom{M}{\dual}=0$, and it follows from \cite[Cor.~3.5.11(b)]{BruHer} that one can not have $\operatorname{dim}_RM=d$; so $\operatorname{dim}_RM<d$.
\end{proof}

Next we give a somewhat ``general'' example.

\begin{exa}
  \label{exa:MCM-envelope-biduality}
  Let $M$ be a finitely generated $R$-module. If $\da{M}$ is maximal CM, then the identity homomorphism \smash{$\pi = 1_{\da{M}} \colon \da{M} \to \da{M}$} is an $\MCM$-cover of $\da{M}$. The proof of Theorem~A shows that the homomorphism \smash{$\mu = \da{\pi}\delta_M  = \delta_M$}, i.e.~the biduality homomorphism $\delta_M \colon M \to M^{\dagger\dagger}$, is an $\MCM$-envelope $M$.
\end{exa}

Here is a concrete application of the example above.

\begin{exa}
  \label{exa:ideal}
   Let $M$ be a submodule of a maximal CM $R$-module $X$ with the property that  $\operatorname{dim}_R(X/M)<d-1$. For example, $M=\a$ could be an ideal in $X=R$ with \mbox{$\operatorname{height}_R(\a)>1$}; see \cite[Cor.~2.1.4]{BruHer}. Or $M$ could be the submodule $M=(f_1,f_2,\ldots)X$ where $f_1,f_2,\ldots$ is an $X$-regular sequence of length at least two. We claim that, in this case, the inclusion map $\iota \colon M \hookrightarrow X$ is an $\MCM$-envelope of $M$.
   
   To see why, apply the functor $\da{(-)}$ to the short exact sequence \smash{$0 \to M \stackrel{\iota}{\to} X \to X/M \to 0$} to get the exact sequence
  \begin{displaymath}
    0 \longrightarrow \da{(X/M)} \longrightarrow \da{X} \stackrel{\da{\iota}}{\longrightarrow} \da{M} \longrightarrow \Ext{1}{X/M}{\dual}\;.
  \end{displaymath}
  Since \mbox{$d-\operatorname{dim}_R(X/M)>1$} it follows from \cite[Cor.~3.5.11(a)]{BruHer} that $\Hom{X/M}{\dual}=0$ and $\Ext{1}{X/M}{\dual}=0$. Hence the sequence displayed above shows that $\da{\iota}$ is an isomorphism and, in particular, $\da{M} \cong \da{X}$ is maximal CM. Thus \exaref{MCM-envelope-biduality} shows that the biduality homomorphism $\delta_M \colon M \to \dda{M}$ is an $\MCM$-envelope of $M$. It remains to argue that $\delta_M$ can be identified with $\iota \colon M \hookrightarrow X$; however, this follows from the commutative diagram:
  \begin{displaymath}
    \xymatrix{
       M \ar[r]^-{\iota} \ar[d]_-{\delta_M} & X \ar[d]_-{\cong}^-{\delta_X} \\
       \dda{M} \ar[r]_-{\cong}^-{\dda{\iota}} & \dda{X} 
    }
  \end{displaymath}
  Indeed, $\delta_X$ is an isomorphism as $X \in \MCM$, and $\dda{\iota} = \da{(\da{\iota})}$ is an isomorphism as $\da{\iota}$ is so.
\end{exa}

\begin{rmk}
  For a special $\MCM$-precover $\pi \colon X \to M$ of a finitely generated module $M$, the kernel $\Ker{\pi}$ has finite injective dimension, and hence one has $\Ext{i}{X}{\Ker{\pi}}=0$ for every $X \in \MCM$ and every $i>0$\,---\,not just for $i=1$. A similar phenomenon does not occur for special $\MCM$-preenvelopes. Indeed, if in \exaref{ideal} one has e.g.~$\operatorname{dim}_R(X/M)=d-2$, then $\Coker{\mspace{1mu}\iota} = X/M$ satisfies $\Ext{2}{X/M}{\dual} \neq 0$ by \cite[Cor.~3.5.11(b)]{BruHer}.
\end{rmk}

\section{$\MCM$-envelopes with the unique lifting property}
\label{sec:ulp}

If $\mu \colon M \to X$ is an $\MCM$-preenvelope of a finitely generated $R$-module $M$, then the induced homomorphism $\Hom{\mu}{Y} \colon \Hom{X}{Y} \to \Hom{M}{Y}$ is surjective for every $Y \in \MCM$; see~\dfnref{preenvelope}. If $\Hom{\mu}{Y}$ is an isomorphism for every $Y \in \MCM$, then we say that the $\MCM$-preenvelope $\mu$ has the \emph{unique lifting property}. Indeed, in this case, there exists for every homomorphism $\nu \colon M \to Y$ with $Y \in \MCM$ a unique homomorphism $\varphi \colon X \to Y$ that makes the following diagram commute:
\begin{displaymath}
  \xymatrix{
    M \ar[r]^-{\mu} \ar[d]_-{\nu} & X \ar@{-->}[dl]^-{\varphi} \\
    Y & {}
   }
\end{displaymath}
Note that an $\MCM$-preenvelope $\mu \colon M \to X$ with the unique lifting property must necessarily be an $\MCM$-envelope. Indeed, the only endomorphism $\varphi$ of $X$ with $\varphi\mu=\mu$ is $\varphi=1_X$.

\enlargethispage{1ex}

\begin{lem}
  \label{lem:depth-of-dagger-dual}
  For any finitely generated $R$-module $M$, one has $\depth{(\da{M})} \geqslant \min\{d,2\}$.
\end{lem}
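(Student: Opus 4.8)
The plan is to exhibit $\da{M}$ as the left-most term of a four-term exact complex built out of copies of the dualizing module, and then to quote \lemref{depth}. Concretely, I would start from a finite free presentation
\begin{displaymath}
  R^{a} \stackrel{\phi}{\longrightarrow} R^{b} \longrightarrow M \longrightarrow 0
\end{displaymath}
of the finitely generated module $M$. Applying the left exact contravariant functor $\da{(-)}=\Hom{-}{\dual}$ and using the canonical isomorphisms $\da{(R^{n})}\cong\dual^{n}$, one obtains an exact sequence
\begin{displaymath}
  0 \longrightarrow \da{M} \longrightarrow \dual^{b} \stackrel{\da{\phi}}{\longrightarrow} \dual^{a} \longrightarrow C \longrightarrow 0\,,
\end{displaymath}
where $C=\Coker{\da{\phi}}$ is some finitely generated $R$-module.

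Now the dualizing module $\dual$ is maximal CM (its depth equals $d$), hence so are the finite direct sums $\dual^{a}$ and $\dual^{b}$. Therefore the four-term exact sequence above is precisely of the shape to which \lemref{depth} applies with $m=2$, reading $C$ in the role of ``$M$'' there; that lemma then gives
\begin{displaymath}
  \depth{(\da{M})} \;\geqslant\; \min\{d,\depth{C}+2\}\,.
\end{displaymath}
Since $\depth{C}\geqslant 0$ in all cases (with the convention $\depth{0}=\infty$, so that the bound only improves when $C=0$), we conclude $\depth{(\da{M})}\geqslant\min\{d,2\}$, which is the claim.

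I do not anticipate a real obstacle here: this is a routine application of the depth lemma once the presentation is written down. The only minor points to attend to are the identification $\da{(R^{n})}\cong\dual^{n}$ (so that $\da{\phi}$ is genuinely a map between direct sums of copies of $\dual$) and the degenerate case $C=0$. If one prefers to avoid invoking \lemref{depth}, the same conclusion follows by splitting the four-term sequence into the short exact sequences $0\to\da{M}\to\dual^{b}\to N\to 0$ and $0\to N\to\dual^{a}\to C\to 0$ with $N=\Im{\da{\phi}}$, and applying the standard depth estimate for short exact sequences \cite[Prop.~1.2.9]{BruHer} twice: first $\depth{N}\geqslant\min\{d,\depth{C}+1\}\geqslant\min\{d,1\}$, and then $\depth{(\da{M})}\geqslant\min\{d,\depth{N}+1\}\geqslant\min\{d,2\}$.
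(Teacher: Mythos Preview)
Your proof is correct and follows essentially the same approach as the paper: take a free presentation of $M$, apply $\da{(-)}$ to obtain a four-term exact sequence with maximal CM modules in the middle, and invoke \lemref{depth} with $m=2$. The paper writes the middle terms as $\da{L_0}$ and $\da{L_1}$ rather than identifying them explicitly with powers of $\dual$, but this is cosmetic.
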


\begin{proof}
  Take an exact sequence $L_1 \to L_0 \to M \to 0$ where $L_0$ and $L_1$ are finitely generated and free. Since the functor $\da{(-)}=\Hom{-}{\dual}$ is left exact, we get an exact sequence, $0 \to \da{M} \to \da{L_0} \to \da{L_1} \to C \to 0$, where $C$ is the cokernel of the homomorphism $\da{L_0} \to \da{L_1}$. Since the modules \smash{$\da{L_0}$} and \smash{$\da{L_1}$} are maximal CM, \lemref{depth} yields
  \begin{displaymath}
    \depth{(\da{M})} \geqslant \min\{d,\depth{C}+2\} \geqslant \min\{d,2\}\;. \qedhere
  \end{displaymath}
\end{proof}

\begin{proof}[Proof of Theorem~B]
  \proofofimp{i}{ii} Consider an exact sequence of finitely generated modules
  \begin{displaymath}
    0 \longrightarrow K \longrightarrow L_1 \stackrel{\alpha}{\longrightarrow} L_0 \longrightarrow N \longrightarrow 0\;,
  \end{displaymath}
where $L_0$ and $L_1$ are free and $K = \Ker{\alpha }$. From 
\cite[Prop.~1.2.9]{BruHer} (last inequality) one gets 
\begin{displaymath}
  \tag{\text{$*$}}
  \depth{N} \geqslant \depth{K}-2\;.
\end{displaymath}
Set $C=\Coker{(\da{\alpha})}$ and consider the exact sequence \smash{$\da{L_0} \stackrel{\da{\alpha}}{\longrightarrow} \da{L_1} \longrightarrow C \longrightarrow 0$}. As the functor $\da{(-)}$ is left exact, we get a commutative diagram with exact rows,
\begin{displaymath}
  \xymatrix{
     0 \ar[r] & K \ar[r] & L_1 \ar[d]_-{\cong}^-{\delta_{L_1}} \ar[r]^-{\alpha} & 
     L_0 \ar[d]_-{\cong}^-{\delta_{L_0}}
     \\
     0 \ar[r] & \da{C} \ar[r] & \dda{L_1} \ar[r]^-{\dda{\alpha}} & \dda{L_0}
  }
\end{displaymath}
which shows that $K \cong \da{C}$, since $\delta_{L_0}$ and $\delta_{L_1}$ are isomorphisms. By the assumption \eqclbl{i}, the module $K$ is therefore maximal CM, and hence the inequality ($*$) yields $\depth{N} \geqslant d-2$. As this holds for every finitely generated $R$-module $N$, it holds in particular for the residue field $N=k$. We get $0=\depth{k} \geqslant d-2$, and thus $d \leqslant 2$.

\proofofimp{ii}{iii} If $d \leqslant 2$, then \lemref{depth-of-dagger-dual} shows that for every finitely generated $R$-module $M$, the module $\da{M}$ is maximal CM, and hence so is $\dda{M}$. Thus $\mathrm{F}=\dda{(-)}$ is a functor $\mod \to \MCM$, which we claim is a left adjoint of the inclusion $\mathrm{G} \colon \MCM \to \mod$. For each finitely generated $R$-module $M$ and each maximal CM $R$-module $X$, the homomorphism
\begin{displaymath}
  \xymatrix@C=6.5pc{
    \Hom{\mathrm{F}M}{X} = \Hom{\dda{M}}{X} \ar[r]^-{\varphi_{M,X}\,=\,\Hom{\delta_M}{X}} & \Hom{M}{X} = \Hom{M}{\mathrm{G}X}
  }
\end{displaymath}
is evidently natural in $M$ and $X$; and it is surjective since the biduality map $\delta_M \colon M \to M^{\dagger\dagger}$ is an $\MCM$-preenvelope of $M$ by \exaref{MCM-envelope-biduality}. It remains to see that $\Hom{\delta_M}{X}$ is in\-jec\-tive. To this end, let $\mu \colon \dda{M} \to X$ be a homomorphism with $\mu \delta_M = \Hom{\delta_M}{X}(\mu) = 0$. It follows that \smash{$\da{\delta_M}\da{\mu} = \da{(\mu \delta_M)} = 0$}. As $\da{M}$ is maximal CM, the biduality map \smash{$\delta_{\da{M}}$} is an isomorphism, and hence so is 
\smash{$\da{\delta_M}$} by \lemref{split}. Since \smash{$\da{\delta_M}\da{\mu} = 0$} we conclude that $\da{\mu} = 0$. Thus $\dda{\mu}=\da{(\da{\mu})} = 0$ and consequently $\mu= \delta_X^{-1}\dda{\mu}\delta_{\dda{M}}=0$, as desired.

\proofofimp{iii}{iv} Let $\mathrm{F} \colon \mod \to \MCM$ be a left adjoint of the inclusion $\mathrm{G} \colon \MCM \to \mod$. For every finitely generated $R$-module $M$, the unit of adjunction $\eta_M \colon M \to \mathrm{G}\mathrm{F} M$ induces, for every maximal CM $R$-module $Y$, an isomorphism:
\begin{displaymath}
  \varphi_{M,Y} \colon \Hom{\mathrm{F} M}{Y} \stackrel{\cong}{\longrightarrow} \Hom{M}{\mathrm{G} Y} 
  \quad \text{ given by } \quad \alpha \longmapsto \mathrm{G}(\alpha)\eta_M\;;
\end{displaymath}
see \cite[IV.1~Thm.~1]{Mac}. If we suppress the inclusion functor $\mathrm{G}$ and set $X= \mathrm{G}\mathrm{F} M = \mathrm{F} M$, which is maximal CM by the assumption on $\mathrm{F}$, we see that unit of adjunction $\eta_M \colon M \to X$ has the property that the map
\begin{displaymath}
  \Hom{X}{Y} \stackrel{\cong}{\longrightarrow} \Hom{M}{Y} 
  \quad \text{ given by } \quad \alpha \longmapsto \alpha\eta_M = \Hom{\eta_M}{Y}(\alpha)
\end{displaymath}
is an isomorphism. Thus, $\eta_M$ is an $\MCM$-envelope of $M$ with the unique lifting property.

\proofofimp{iv}{i} Let $M$ be a finitely generated $R$-module. By assumption, $M$ has an $\MCM$-envelope $\mu \colon M \to X$ with the unique lifting property. Since $\dual$ is maximal CM, the homomorphism $\da{\mu} \colon \da{X} \to \da{M}$ is an isomorphism, and as $\da{X}$ is maximal CM, so is $\da{M}$.
\end{proof}

\section{Cosyzygies with respect to $\MCM$}
\label{sec:cosyz}

Let $\mathcal{A}$ be a full subcategory of an abelian category $\mathcal{M}$ (for example, $\mathcal{M}=\mod$ and $\mathcal{A}=\MCM$). 

Assume that every object in $\mathcal{M}$ has an $\mathcal{A}$-precover. In this case, every $M \in \mathcal{M}$ admits a \emph{proper $\mathcal{A}$-resolution} i.e.~a, not necessarily exact, complex \mbox{$\mathbb{A} = \cdots \to A_1 \to A_0 \to M \to 0$} with $A_i \in \mathcal{A}$ such that the sequence $\Hom[\mathcal{M}]{A}{\mathbb{A}}$ is exact for every $A \in \mathcal{A}$. Such a resolution is constructed recursively as follows: Take an $\mathcal{A}$-precover $\pi_0 \colon A_0 \to M$ of $M$ and set $K_1 = \Ker{\pi_0}$; take an $\mathcal{A}$-precover $\pi_1 \colon A_1 \to K_1$ of $K_1$ and set $K_2 = \Ker{\pi_1}$; etc. The object $K_n$ is denoted by $\Syz{n}{M}$ and it is called the \emph{$n^\mathrm{th}$ syzygy of $M$ with respect to $\mathcal{A}$}. A given object $M \in \mathcal{M}$ has, typically, many different $\mathcal{A}$-precovers and proper $\mathcal{A}$-resolutions, so $\Syz{n}{M}$ is not uniquely determined by $M$; but it almost is: The version of Schanuel's lemma found in \cite[Lem.~2.2]{EJO-01} shows that if $K_n$ and $\bar{K}_n$ are both $n^\mathrm{th}$ syzygies of $M$ with respect to $\mathcal{A}$, then there exist $A,\bar{A} \in \mathcal{A}$ such that $K_n \oplus \bar{A} \cong \bar{K}_n \oplus A$. In particular, if $\mathcal{A}$ is closed under direct summands (as is the case if $\mathcal{A} = \MCM$), then $K_n$ belongs to $\mathcal{A}$ if and only if $\bar{K}_n$ belongs to $\mathcal{A}$; and thus it makes sense to ask if $\Syz{n}{M}$ belongs to $\mathcal{A}$.

If every object in $\mathcal{M}$ admits an $\mathcal{A}$-cover, then $\pi_0,\pi_1,\ldots$ in the construction above can be chosen to be $\mathcal{A}$-covers, and the resulting proper $\mathcal{A}$-resolution is then called a \emph{minimal proper $\mathcal{A}$-resolution} of $M$. In this case, $K_n$ is called the \emph{minimal $n^\mathrm{th}$ syzygy of $M$ with respect to $\mathcal{A}$}, and it is denoted by $\mSyz{n}{M}$ (small ``$\mathrm{s}$'' instead of capital ``$\mathrm{S}$''). Since an $\mathcal{A}$-cover (of a given object in $\mathcal{M}$) is unique up to isomorphism, see Xu \cite[Thm.~1.2.6]{xu}, the object $\mSyz{n}{M}$ is uniquely determined, up to isomorphism, by $M$.

Dually, if every $M \in \mathcal{M}$ has an $\mathcal{A}$-preenvelope (respectively, $\mathcal{A}$-envelope), then a \emph{proper $\mathcal{A}$-coresolution} (respectively, \emph{minimal proper $\mathcal{A}$-coresolution}) \mbox{$0 \to M \to A^0 \to A^1 \to \cdots$} 
can always be constructed as follows: Take an $\mathcal{A}$-preenvelope (respectively, $\mathcal{A}$-envelope) $\mu^0 \colon M \to A^0$ of $M$ and set $C^1 = \Coker{\mu^0}$; take an $\mathcal{A}$-preenvelope (respectively, $\mathcal{A}$-envelope) $\mu^1 \colon C^1 \to A^1$ of $C^1$ and set $C^2 = \Coker{\mu^1}$;~etc. The object $C^n$ is called the \emph{$n^\mathrm{th}$ cosyzygy of $M$ with respect to $\mathcal{A}$} (respectively, the \emph{minimal $n^\mathrm{th}$ cosyzygy of $M$ with respect to $\mathcal{A}$}) and it is denoted by $\Cosyz{n}{M}$ (respectively, $\mCosyz{n}{M}$). The object $\mCosyz{n}{M}$ is uniquely determined, up to isomorphism, by $M$. The object $\Cosyz{n}{M}$ is almost uniquely determined by $M$ in the sense that if $C^n$ and $\bar{C}^n$ are both $n^\mathrm{th}$ cosyzygies of $M$ with respect to $\mathcal{A}$, then there exist $A,\bar{A} \in \mathcal{A}$ such that $C^n \oplus \bar{A} \cong \bar{C}^n \oplus A$. Thus, if $\mathcal{A}$ is closed under direct summands, then it makes sense to ask if $\Cosyz{n}{M}$ belongs to $\mathcal{A}$.

We supplement the definitions above by setting $\Syz{0}{M} = \mSyz{0}{M} = M$, and similarly $\Cosyz{0}{M} = \mCosyz{0}{M} = M$.

\begin{exa}
\label{exa:minimal-free-resolution}
Let $(A,\n,\ell)$ be any local ring and let $\mathcal{F}$ be the class of finitely generated free $A$-modules. Every finitely generated $A$-module $M$ has an $\mathcal{F}$-cover; to construct it one takes a minimal set $x_1,\ldots,x_b$ of generators of $M$ (here $b=\beta_0^A(M)$ is the zero'th Betti number of $M$) and then defines $A^b \twoheadrightarrow M$ by $e_i \mapsto x_i$; see \cite[Thm.~5.3.3]{rha}. A minimal proper $\mathcal{F}$-resolution 
$\cdots \to F_1 \to F_0 \to M \to 0$ of a finitely generated $A$-module $M$ is nothing but a \emph{minimal free resolution} of $M$ in the classical sense, that is, where each homomorphism $F_{n} \to F_{n-1}$ becomes zero when tensored with the residue field $\ell$ of $A$.
\end{exa}

\begin{prp}
  \label{prp:second-syzygy}
  Let $M$ be a finitely generated $R$-module such that $\da{M}$ is maximal CM. Then the second cosyzygy, $\Cosyz[\MCM]{2}{M}$, of $M$ with respect to $\MCM$ is maximal CM.
\end{prp}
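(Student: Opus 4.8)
The plan is to exhibit one convenient proper $\MCM$-coresolution of $M$ and read off its second cosyzygy. Since $\MCM$ is closed under direct summands, whether $\Cosyz[\MCM]{2}{M}$ is maximal CM does not depend on the chosen proper $\MCM$-coresolution, so it suffices to make a good choice of one.

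First I would invoke \exaref{MCM-envelope-biduality}: as $\da{M}$ is maximal CM, the biduality homomorphism $\delta_M \colon M \to \dda{M}$ is an $\MCM$-envelope of $M$. Taking $\mu^0 = \delta_M$ as the first map of the coresolution, we get $\Cosyz[\MCM]{1}{M} = \Coker{\delta_M}$; write $C$ for this module.

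The crux is to prove $\da{C} = 0$. I would apply the left-exact functor $\da{(-)}$ to the exact sequence $M \stackrel{\delta_M}{\longrightarrow} \dda{M} \to C \to 0$ to obtain an exact sequence
\[
0 \longrightarrow \da{C} \longrightarrow \ddda{M} \stackrel{\da{\delta_M}}{\longrightarrow} \da{M},
\]
so that $\da{C} \cong \Ker{(\da{\delta_M})}$. By \lemref{split} one has $\da{\delta_M} \circ \delta_{\da{M}} = 1_{\da{M}}$, and $\delta_{\da{M}}$ is an isomorphism because $\da{M}$ is maximal CM (see \ref{MCM-duality}); hence $\da{\delta_M}$ is an isomorphism and $\da{C} = 0$. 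This is precisely where the hypothesis $\da{M} \in \MCM$ enters, and it is the only step that is not pure bookkeeping — though even it is short.

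Finally, $\Hom{C}{\dual} = \da{C} = 0$ forces $\operatorname{dim}_R C < d$ by \cite[Cor.~3.5.11]{BruHer}, so \prpref{trivial-MCM-envelope} shows that the zero map $C \to 0$ is an $\MCM$-envelope of $C$. Choosing $\mu^1 \colon C \to 0$ as the next map of the coresolution yields $\Cosyz[\MCM]{2}{M} = \Coker{\mu^1} = 0$, which is maximal CM. I do not expect any real obstacle here; the whole argument reduces to the short computation $\da{C} = 0$ via \lemref{split}, after which \prpref{trivial-MCM-envelope} finishes it off.
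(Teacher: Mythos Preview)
Your proof is correct and follows essentially the same route as the paper's own proof: both use \exaref{MCM-envelope-biduality} to take $\delta_M$ as the first step, apply $\da{(-)}$ together with \lemref{split} to deduce $\da{C}=0$, and then invoke \prpref{trivial-MCM-envelope} to conclude that the minimal second cosyzygy is zero. The only cosmetic difference is that you front-load the remark about independence of the chosen coresolution, whereas the paper closes with it.
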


\begin{proof}
  By \exaref{MCM-envelope-biduality} the biduality homomorphism $\delta_{M} \colon M \to \dda{M}$ is an $\MCM$-envelope of $M$. Set $C^1 = \mCosyz[\MCM]{1}{M} = \Coker{\delta_M}$. The exact sequence \smash{$M \stackrel{\delta_M}{\longrightarrow} \dda{M} \longrightarrow C^1 \longrightarrow 0$} induces, by application of the left exact functor $\da{(-)}$, an exact sequence
  \begin{displaymath}
    \xymatrix@C=1.5pc{
       0 \ar[r] & \da{(C^1)} \ar[r] & \ddda{M} \ar[r]^-{\da{\delta_M}} & \da{M} 
     }.
  \end{displaymath}
  As $\da{M}$ is maximal CM, the biduality homomorphism $\delta_{\da{M}}$ is an isomorphism, and hence so is \smash{$\da{\delta_M}$} by \lemref{split}. It follows that $\Hom{C^1}{\dual}=\da{(C^1)}=0$, so \cite[Cor.~3.5.11(b)]{BruHer} implies that $\operatorname{dim}_R(C^1)<d$. Thus \prpref{trivial-MCM-envelope} shows that $C^1 \to 0$ is an $\MCM$-envelope of $C^1$, and therefore the minimal second cosyzygy of $M$ with respect to $\MCM$ is zero:  
\begin{displaymath}
  \mCosyz[\MCM]{2}{M} \,=\, \mCosyz[\MCM]{1}{C^1} \,=\, \Coker{(C^1 \to 0)} \,=\,0\;.
\end{displaymath}
Hence any second cosyzygy of $M$ with respect to $\MCM$ must be maximal CM.
\end{proof}

We now prove Theorem~C from the Introduction.

\begin{proof}[Proof of Theorem~C]
  First note, that if $X$ is a maximal CM $R$-module, then $\Cosyz[\MCM]{i}{X}$ is clearly maximal CM for every $i \geqslant 0$. If $n \geqslant d$, then the
  $n^\mathrm{th}$ cosyzygy of $M$ is an $(n-d)^\mathrm{th}$ cosyzygy of $\Cosyz[\MCM]{d}{M}$, that is, 
\begin{displaymath}  
  \Cosyz[\MCM]{n}{M} \,=\, \Cosyz[\MCM]{n-d}{\Cosyz[\MCM]{d}{M}}\;;
\end{displaymath}  
so it suffices to argue that $\Cosyz[\MCM]{d}{M}$ is maximal CM.

If $d=0$, then certainly $\Cosyz[\MCM]{0}{M}=M$ is maximal CM, since every finitely generated $R$-module is maximal CM over an artinian ring.
    
Assume that $d=1$. By Theorem~A we can take a special $\MCM$-preenvelope $\mu \colon M \to X$ of $M$. We must show that $C^1 = \Cosyz[\MCM]{1}{M} = \Coker{\mu}$ is maximal CM. By definition, we have $\Ext{1}{C^1}{Y}=0$ for all $Y \in \MCM$, in particular, $\Ext{1}{C^1}{\dual}=0$. Since $\dual$ has injective dimension $d=1$, we also have $\Ext{i}{-}{\dual}=0$ for all $i>1$, and consequently, $\Ext{i}{C^1}{\dual}=0$ for all $i>0$. Thus $C^1$ is maximal CM.

Finally, assume that $d \geqslant 2$. Let $0 \to M \to X^0 \to \cdots \to X^{d-3} \to C^{d-2} \to 0$ be part of a proper $\MCM$-coresolution of $M$, where $C^{d-2} = \Cosyz[\MCM]{d-2}{M}$. In the case $d=2$, this just means that we consider the module $C^{0} = \Cosyz[\MCM]{0}{M}=M$. Since the module $\dual$ is maximal CM, the sequence
\begin{displaymath}  
  0 \longrightarrow \da{(C^{d-2})} \longrightarrow \da{(X^{d-3})} \longrightarrow \cdots \longrightarrow \da{(X^0)} \longrightarrow \da{M} \longrightarrow 0
\end{displaymath}  
is exact. Now \lemref[Lemmas~]{depth} and \lemref[]{depth-of-dagger-dual} yield $\depth{\da{(C^{d-2})}} \geqslant \min\{d,\depth{\da{M}}+d-2\} = d$, so
$\da{(C^{d-2})} = \da{(\Cosyz[\MCM]{d-2}{M})}$ is maximal CM. \prpref{second-syzygy} now yield that 
\begin{displaymath}  
  \Cosyz[\MCM]{d}{M} \,=\, \Cosyz[\MCM]{2}{\Cosyz[\MCM]{d-2}{M}}
\end{displaymath}  
is  maximal CM, as desired.
\end{proof}

Dutta \cite{Dutta} shows that if $R$ is not regular, then no syzygy in the minimal free resolution of the residue field $k$ (see \exaref{minimal-free-resolution}) can contain a non-zero free direct summand. The following result has the same flavour, but its proof is easy. Actually, the proof of \cite[Prop. 2.6]{RTk06b} applies to prove \prpref{no-free-summand} as well, but since it is so short, we repeat it here.

\begin{prp}
  \label{prp:no-free-summand}
  Assume that every finitely generated $R$-module has an $\MCM$-envelope (by Theorem~A, this is the case if $R$ is henselian). Let $M$ be a finitely generated $R$-module and let $n \geqslant 1$ be an integer. The minimal $n^\mathrm{th}$ cosyzygy, $\mCosyz[\MCM]{n}{M}$, of $M$ with respect to $\MCM$ contains no non-zero free direct summand.
\end{prp}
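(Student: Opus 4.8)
The plan is to argue by contradiction, exploiting the \emph{minimality} of the envelopes used to build a minimal proper $\MCM$-coresolution. First I would unwind the definitions: writing $C^0 = M$ and $C^{i+1} = \Coker{\mu^i}$, where $\mu^i \colon C^i \to X^i$ is an $\MCM$-envelope of $C^i$, one has $\mCosyz[\MCM]{n}{M} = C^n$, and for $n \geqslant 1$ this module is presented as $C^n = \Coker{\mu}$ with $\mu := \mu^{n-1} \colon C^{n-1} \to X^{n-1}$ an $\MCM$-envelope; in particular every endomorphism $\varphi$ of $X^{n-1}$ with $\varphi\mu = \mu$ is an automorphism. This is exactly where $n \geqslant 1$ enters: for $n=0$ the module $\mCosyz[\MCM]{0}{M} = M$ may well be free. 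Now suppose toward a contradiction that $C^n$ has a non-zero free direct summand; splitting off a single copy, $R$ itself is then a direct summand of $C^n$, say with projection $p \colon C^n \twoheadrightarrow R$.

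The key step is to produce from $p$ a nontrivial idempotent endomorphism of $X := X^{n-1}$ that fixes $\mu$. Let $\pi \colon X \twoheadrightarrow C^n$ be the cokernel map, so that $\Ker{\pi} = \Im{\mu}$. The composite $p\pi \colon X \to R$ is surjective, and since $R$ is free it splits; hence $X = X' \oplus S$ with $X' = \Ker{(p\pi)}$ and $S \cong R \neq 0$. The main point is that $\Im{\mu} = \Ker{\pi} \subseteq \Ker{(p\pi)} = X'$, so $\mu$ factors through the inclusion $X' \hookrightarrow X$. Consequently the idempotent $e \colon X \to X$ given by the projection $X = X' \oplus S \to X'$ followed by the inclusion $X' \hookrightarrow X$ satisfies $e\mu = \mu$. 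Since $\mu$ is an $\MCM$-envelope, $e$ must be an automorphism of $X$; but an idempotent automorphism equals the identity, which forces $S = \Ker{e} = 0$, hence $R \cong S = 0$ --- the desired contradiction.

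I do not expect a genuine obstacle here: the argument is elementary module theory once the right idempotent has been spotted, and it is essentially the proof of \cite[Prop.~2.6]{RTk06b}. The only points needing a word of care are that the minimal proper $\MCM$-coresolution (hence the envelope $\mu^{n-1}$) is available, which is guaranteed by the standing hypothesis that every finitely generated $R$-module has an $\MCM$-envelope, and the verification that $p\pi$ annihilates $\Im{\mu}$, which is immediate from $\Ker{\pi} = \Im{\mu}$ together with the fact that $p\pi$ factors through $\pi$.
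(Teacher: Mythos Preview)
Your proof is correct and is essentially the same argument as the paper's: your $X'$, $S$, and idempotent $e$ are exactly the paper's $K$, $F$, and $\iota\sigma$, and both proofs conclude by noting that an envelope forces this idempotent to be an automorphism (hence the identity), so the free summand vanishes. The only cosmetic differences are that the paper explicitly reduces to $n=1$ and works with an arbitrary free summand $F$ rather than peeling off a single copy of $R$.
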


\begin{proof}
  It suffices to consider the case $n=1$. Let $\mu \colon M \to X$ be an $\MCM$-envelope of $M$, set $C=\mCosyz[\MCM]{1}{M} = \Coker{\mu}$, and write write $\pi \colon X \twoheadrightarrow C$ for the canonical homomorphism. Let $F$ be a free direct summand in $C$ and denote by $\rho \colon C \twoheadrightarrow F$ the projection onto this direct summand. We have a commutative diagram,
\begin{displaymath}
  \xymatrix{
    {} & 
    M \ar[r]^-{\mu} \ar[d]^-{\mu_0} & 
    X \ar[r]^-{\pi} \ar@{=}[d] & 
    C \ar@{->>}[d]^-{\rho} \ar[r] & 0\;\phantom{,}
    \\
    0 \ar[r] & 
    K \ar[r]^-{\iota} & 
    X \ar[r]^-{\rho\pi} & 
    F \ar[r] & 0\;,
  }
\end{displaymath}
where $\iota \colon K \to X$ is the kernel of $\rho\pi$, and $\mu_0$ is the corestriction of $\mu$ to $K$. Since $F$ is free, the lower short exact sequence splits, so $\iota$ has a left inverse $\sigma \colon X \to K$. The endomorphism $\iota\sigma$ of $X$ satisfies $\iota\sigma\mu = \iota\sigma\iota\mu_0 = \iota\mu_0 = \mu$, and since $\mu$ is an envelope, we conclude that $\iota\sigma$ is an automorphism. In particular, $\iota$ is surjective, and hence $F$ is zero.
\end{proof}

\def\cprime{$'$}
  \providecommand{\arxiv}[2][AC]{\mbox{\href{http://arxiv.org/abs/#2}{\sf
  arXiv:#2 [math.#1]}}}
  \providecommand{\oldarxiv}[2][AC]{\mbox{\href{http://arxiv.org/abs/math/#2}{\sf
  arXiv:math/#2
  [math.#1]}}}\providecommand{\MR}[1]{\mbox{\href{http://www.ams.org/mathscinet-getitem?mr=#1}{#1}}}
  \renewcommand{\MR}[1]{\mbox{\href{http://www.ams.org/mathscinet-getitem?mr=#1}{#1}}}
\providecommand{\bysame}{\leavevmode\hbox to3em{\hrulefill}\thinspace}
\providecommand{\MR}{\relax\ifhmode\unskip\space\fi MR }
\providecommand{\MRhref}[2]{%
  \href{http://www.ams.org/mathscinet-getitem?mr=#1}{#2}
}
\providecommand{\href}[2]{#2}

\end{document}